\newtheorem{theorem}{Theorem}[section]
\newtheorem{lemma}[theorem]{Lemma}
\newtheorem{proposition}[theorem]{Proposition}
\newtheorem{claim}[theorem]{Claim}
\newtheorem{corollary}[theorem]{Corollary}
\theoremstyle{definition}
\newtheorem{notation}[theorem]{Notation}
\newtheorem{definition}[theorem]{Definition}
\newtheorem{example}[theorem]{Example}
\theoremstyle{remark}
\newtheorem{remark}[theorem]{Remark}
\numberwithin{equation}{section}
\begin{document}
\baselineskip = 1,5\baselineskip
\title{Spaces of $\mathbb R$ - places of rational function fields }

\author{Micha\l\; Machura\, \,and \,\, Katarzyna Osiak}
\address{Institute of Mathematics, Silesian University, Bankowa 14, 40-007 Katowice, Poland}
\email{kosiak@ux2.math.us.edu.pl\\machura@ux2.math.us.edu.pl}
\thanks{
2000 {\em Mathematics Subject Classification.} Primary 12D15; Secondary 14P05.\\
\indent
{\em Key words and phrases.} real place, spaces of real places}



\begin{abstract}
In the paper  an answer to a problem "When different orders of R(X) (where R is a
real closed field) lead to the same real place ?"  is given. We use this result to show that the space
of $\mathbb R$-places of the field $\textbf{R}(Y)$ (where \textbf{R} is any real closure of $\mathbb R(X)$) is not metrizable space. Thus the space $M(\mathbb R(X,Y))$ is not metrizable, too.
\end{abstract}

\maketitle

\section{Introduction}
The studies on  real places of formally real fields were initiated by  Dubois \cite{du} and Brown \cite{br}.
The research has been continued in several papers: Brown and Marshall \cite{bm}, Harman \cite{h}, Sch\"{u}lting \cite{sch}, Becker and Gondard \cite{bg} and Gondard and Marshall \cite{gm}. We will use notation and  terminology introduced by Lam \cite {l} - most of the results we shall recall in this section can be found there.
We assume that the reader is somewhat familiar with the valuation theory and the theory of formally real (ordered) fields.

Let $K$ be an ordered field. The set $\mathcal X(K)$ of all orders of $K$  can be made into a topological space  by using as a subbase the family of Harrison sets of the form $$H_K(a):=\{P\in \mathcal X(K):\, a\in P\},\,\,\,a\in \dot K = K\setminus\{0\}.$$
It is known that the space $\mathcal X(K)$ is a Boolean space (i.e. compact, Hausdorff and totally disconnected).

If $P$ is an order of $K$, then  the set 
$$A(P):= \{a\in K: \exists _ {q \,\in\,\mathbb Q^+}\,\, q \pm a\in P\}$$ 
is a valuation ring of $K$ with the maximal ideal 
$${I(P)}:= \{a\in K: \forall_ {q \,\in\,\mathbb Q^+}\,\, q\pm a\in P\}.$$ 
Moreover, the residue field $k(P) = A(P)/I(P)$ is ordered by an Archimedean order
$$\bar P := (P\cap \dot A(P)) + I(P),$$ where  $\dot A(P)$ is the set of units of $A(P)$. 

The set  $$\mathcal H(K)=  \bigcap_{P \in \mathcal X(K)} A(P )$$ is called {\em the real holomorphy ring} of the field $K$.

Since $k(P)$  has an Archimedean order, we consider it as a subfield of $\mathbb R$ and the place associated to $A(P)$ is called an {\em $\mathbb R$ - place}. Moreover, every place of $K$ with values in $\mathbb R$  is determined by some order of $K$. We denote by   $M(K)$ the set of all $\mathbb R$ - places of the field $K$. In fact, we have a surjective map
$$\lambda_K: \mathcal X(K)\longrightarrow  M(K).$$
and we can equip  $M(K)$ with the quotient topology inherited  from $ \mathcal X(K)$.

Note, that in terminology of  $\mathbb R$ - places we have $$\mathcal H(K)= \{a \in K:\;\; \forall_ { \xi\,\in\,M(K)}:\, \xi(a)\neq \infty\}.$$ For any $a \in \mathcal H(K)$ the map 
$$e_a:\,\, M(K)\longrightarrow\mathbb R, \;\;\;\;\;e_a(\xi) = \xi(a)$$ is called the evaluation map. The evaluation maps are continuous in the quotient topology of $M(K)$ and the set of evaluation maps separates  points of $M(K)$. Therefore $M(K)$ is a Hausdorff space. It is also compact as a continuous image of a compact space. By \cite [Theorem 9.11]{l},  the family of sets $$U_K(a) = \{\xi\in M(K):\, \xi(a)>0\},\,\,\text{for} \,\,a\in \mathcal H(K).$$ is a subbase for the topology of $M(K)$.
These sets may not  be closed and therefore the space $M(K)$ need not be  Boolean. However, every Boolean space is realized as a space of $\mathbb R$ - places of some formally real field. For this result see \cite {o}.

On the other hand there are a lot of examples of fields for which the space of  $\mathbb R$ - places has a finite number of connected components and even turns out to be connected. If  $K$ is a real closed field, then the space $M(K)$ has only one point. It is well known (see   \cite{sch}, \cite{bg}) that the space of $M(\mathbb R(X))$ is homeomorphic to a circle ring. 
More general result states that the space of  $\mathbb R$ - places  of a rational function field $K(X)$ is connected if and only if $M(K)$ is connected (see \cite{sch}, \cite{h}).

The goal of this paper is to describe the space  of $\mathbb R$ - places of a field $R(X)$,
where  $R$ is any real closed field. The set of orders of $R(X)$ is in one - to - one corespondence with Dedekind cuts of $R(X)$. Therefore the set $\mathcal X(R(X))$ is linearly  ordered. 
The main theorem of the second section shows how the map $\lambda: \mathcal X(R(X)) \longrightarrow
M(R(X))$ glues the points. The natural application is given for {\em the continuous closure} of the field $R$.
 This is a field $\tilde R \supseteq R$ such that $R$ is dense in $\tilde R$ and which is maximal in this respect.
We shall show that the spaces $M(R(X))$ and $M(\tilde R(X))$ are homeomorphic.
Using this result and modifying a method of \cite{z} we describe in the third section connection between completeness (in the sense of uniformity) and \textit{continuity} (see \cite{ba}) of an ordered field endowed with valuation topology.
These results allow us to describe 
the space of $\mathbb R$- places of the field $\textbf{R}(Y)$, where $\textbf{R}$ is a fixed real closure of $\mathbb R(X)$. In the fourth section we will show that it is not metrizable. The space $M(\textbf{R}(Y))$  is known to be subspace $M(\mathbb R(X,Y))$. Therefore $M(\mathbb R(X,Y))$ can not be a metric space.

\section {The real places of $R(X)$}

Let $R$ be a real closed field with its unique order $\dot R^2$. Denote by $v$ the valuation associated to $A(\dot R^2)$. Suppose that $\Gamma$ is the value group of $v$.

By \cite{g}, \cite{z} there is one-to-one correspondence between orders of $R(X)$ and Dedekind cuts of $R$.
If $P$ is an order of  $R(X)$, then the corresponding cut $(A_P, B_P)$ is given by  $A_P =\{a \in R:\,a<X\}$  and $B_P =\{b \in R:\,b>X\}$. On the other hand, if $(A, B)$ in any cut in $R$, then a set
$$P = \{ f \in R(X): \exists_{a\in A} \exists_{b \in B} \forall_{c \in (a,b)} \,f(c) \in \dot R^2\}$$
is an order of $R(X)$ which determines the cut $(A,B)$.

The cuts   $(\emptyset, R)$ and $(R, \emptyset)$ are called {\em the improper cuts}. The orders determined by these cuts are denoted $P_{\infty}^-$ and $P_{\infty}^+$, respectively. 
A cut $(A,B)$ of $R$ is called {\em  normal} if it satisfies a following condition 
\begin{displaymath}
 \forall_{c \in \dot R^2}  \exists_{a\in A} \exists_{b \in B}\,\,\,\,\,b-a<c.
\end{displaymath}

If  $A$ has a maximal or $B$ has a minimal element, then  $(A, B)$ is {\em a principal cut}.
Every $a \in R$ defines two principal cuts with the corresponding orders $P_a^-$ and $P_a^+$.
Note that if $R$ is a real closed subfield of $\mathbb R$, then  all of proper cuts of $R$ are normal. Moreover, if $R = \mathbb R$, then all of proper cuts  are normal and principal.

If  $A$ has not  a maximal element and $B$ has not a minimal element, then we say that $(A, B)$ is {\em a free cut} or {\em a gap}. If $R$ is not contained in $\mathbb R$, then $R$ has the \textit{ abnormal } gaps, i.e. gaps which are not normal.  For example, $(A, B)$ where
$$A = \{ a \in R:\,\, a\leqslant 0\}\cup \{a \in R:\,\, a> 0 \,\,\wedge\,\, a \in I(\dot R^2)\} \,\,\text{and}$$
$$B = \{b \in R:\,\, b> 0 \,\,\wedge\,\, b \notin I(\dot R^2)\},$$
is an abnormal gap in $R$.

In fact we can have three kinds of proper cuts:\\
(1) principal cuts;\\
(2) normal (but not principal) gaps;\\
(3) abnormal gaps.

One has to note that the correspondence between cuts in $R$ and orders of $R(X)$ makes the set $\mathcal X(R(X))$ linearly ordered. If $P$ and  $Q$ are different orders of  $R(X)$, then we say that
$$P\prec Q \Longleftrightarrow A_P\subset A_Q.$$
Let $P_a^- \prec P_a^+$ be the orders corresponding to principal cuts given by $a \in R$ Observe that the interval 
$(P_a^- , P_a^+)$ is empty. In such situations we shall say that  $\prec$ has a step in $a$.
The map
$$\lambda: \mathcal X(R(X))\longrightarrow M(R(X))$$
glues these steps.
By \cite{h}, $M(R(X))$ is connected space. However it can happen that $\lambda$ glues more points of $\mathcal X(R(X))$. Our goal is to answer the question: {\em Which points of $\mathcal X(R(X))$ does $\lambda$ glue?} More exactly, suppose that $(A_1, B_1)$ and $(A_2, B_2)$ are the cuts corresponding to the orders $P_1$ and $P_2$, respectively.
{\em When $\lambda(P_1) = \lambda(P_2)$?}
We shall make use of Separation Criterion \cite[Proposition 9.13]{l}, which allows to separate $\mathbb R$ - places.
\begin{theorem}{\bf [Separation Criterion]}\label{sc}\\
 Let $P$ and $Q$ be distinct orders of a field $K$. Then $\lambda_K(P) \neq \lambda_K(Q)$ if and only if there exists $a \in K$ such that $a \in \dot A(P)\cap P$ and $-a \in Q$.
\end{theorem}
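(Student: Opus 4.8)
The plan is to prove the two implications separately; the ``if'' direction is a short computation with residues, while the ``only if'' direction requires locating a suitable separating element.

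For the ``if'' direction, assume $a\in K$ satisfies $a\in\dot A(P)\cap P$ and $-a\in Q$. Since $a$ is a unit of $A(P)$ lying in $P\cap\dot A(P)$, its residue is a nonzero element of $\bar P$, hence a strictly positive element of $k(P)\subseteq\mathbb R$, so $\lambda_K(P)(a)>0$. I would then run through the three possibilities for $a$ with respect to $A(Q)$: if $a\notin A(Q)$ then $\lambda_K(Q)(a)=\infty$; if $a\in I(Q)$ then $\lambda_K(Q)(a)=0$; and if $a\in\dot A(Q)$ then $-a\in Q\cap\dot A(Q)$ forces $\lambda_K(Q)(-a)>0$, hence $\lambda_K(Q)(a)<0$. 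In each case $\lambda_K(Q)(a)$ fails to equal the positive real $\lambda_K(P)(a)$, so $\lambda_K(P)\neq\lambda_K(Q)$.

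For the converse I would first isolate an elementary fact: for any order $P$ of $K$ and any $b\in A(P)$ with $\lambda_K(P)(b)>0$, one has $b\in\dot A(P)\cap P$. Indeed $\lambda_K(P)(b)\neq 0$ gives $b\notin I(P)$, so $b$ is a unit of the local ring $A(P)$; and since the residue $\bar b$ is a positive element of $\bar P$, we may write $\bar b=\bar p$ with $p\in P\cap\dot A(P)$, whence $b/p\in 1+I(P)$, and $1+I(P)\subseteq P$ by the very definition of $I(P)$ (take the rational $q=1$), so $b=p\cdot(b/p)\in P$. Granting this, the strategy is: from $\lambda_K(P)\neq\lambda_K(Q)$ produce an element $c\in A(P)\cap A(Q)$ with $\lambda_K(P)(c)\neq\lambda_K(Q)(c)$ (note both values are then finite reals), pick a rational $q$ strictly between these two numbers, and set $a:=c-q$ if $\lambda_K(P)(c)>\lambda_K(Q)(c)$ and $a:=q-c$ otherwise. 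Additivity of places gives $\lambda_K(P)(a)>0$ and $\lambda_K(Q)(a)<0$, and since $a,-a\in A(P)\cap A(Q)$ the elementary fact applied to $a$ over $P$ and to $-a$ over $Q$ delivers precisely $a\in\dot A(P)\cap P$ and $-a\in Q$.

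The step I expect to be the main obstacle is producing the separating $c$. The cleanest route uses what is already recorded in the introduction: $M(K)$ is Hausdorff and the sets $U_K(c)=\{\xi\in M(K):\xi(c)>0\}$ for $c\in\mathcal H(K)$ form a subbasis, so the two distinct $\mathbb R$-places $\lambda_K(P),\lambda_K(Q)$ are separated by some $U_K(c)$, which is exactly a $c\in\mathcal H(K)\subseteq A(P)\cap A(Q)$ with $\lambda_K(P)(c)\neq\lambda_K(Q)(c)$. If one wanted to avoid the subbasis theorem and argue directly, I would split into two cases: when $A(P)=A(Q)$, the residue orders $\bar P,\bar Q$ must differ (otherwise the two places coincide, since an Archimedean ordered field embeds into $\mathbb R$ in only one way), and lifting any element of $\bar P\setminus\bar Q$ to $A(P)=A(Q)$ already gives a usable $c$; when $A(P)\neq A(Q)$, choose $x$ lying in exactly one of the two valuation rings and put $c:=1/(1+x^2)$, which belongs to every $A(P')$ because $0<1/(1+x^2)\leq 1$ holds in every order, is a non-unit of the valuation ring missing $x$ (valuation rings being integrally closed), and is a positive unit of the other --- so that one of $\lambda_K(P)(c),\lambda_K(Q)(c)$ is zero and the other positive, hence they are distinct.
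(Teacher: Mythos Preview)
The paper does not prove this statement: it is quoted from \cite[Proposition~9.13]{l} and used as a black box, so there is no in-paper argument to compare yours against.

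Your proof is correct. One minor remark on the ``only if'' direction: you extract a separating $c\in\mathcal H(K)$ from the subbasis $\{U_K(c)\}$, which does work (if a basic open $U_K(c_1)\cap\cdots\cap U_K(c_n)$ contains $\lambda_K(P)$ but not $\lambda_K(Q)$, then some single $U_K(c_i)$ already separates them), but it is even quicker to quote the line just before that in the introduction, namely that the evaluation maps $e_a$ for $a\in\mathcal H(K)$ separate points of $M(K)$. Your alternative case-split argument avoiding the subbasis is also sound; the integral closedness of valuation rings is exactly what forces $1/(1+x^2)\in I(Q)$ when $x\notin A(Q)$.
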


Let $P$ be an order of $R(X)$. Then 
$$A(P) = \{ f \in R(X):  \exists_{q\in \mathbb Q^+} \exists_{a\in A} \exists_{b \in B} \forall_{c \in (a,b)} \, q \pm f(c) \in \dot R^2\}=$$ 
$$\{ f \in R(X): \exists_{a\in A} \exists_{b \in B} \forall_{c \in (a,b)} \,f(c) \in A(\dot R^2)\}, \text{  and  }$$
$$I(P) = \{ f \in R(X):  \forall_{q\in \mathbb Q^+} \exists_{a\in A} \exists_{b \in B} \forall_{c \in (a,b)} \, q \pm f(c) \in \dot R^2\}.$$ 
By {\em a neighborhood} of a cut $(A,B)$ we mean an interval $(a,b)\subset R$ such that $(a,b)\cap A \neq \emptyset$ and $(a,b)\cap B \neq \emptyset$.

\begin{remark}\label{v}
 Let $P$ be an order of $R(X)$. Then $A(P)$ is a set of these functions which on some neighborhood of  $(A_P,B_P)$ have values in  $A(\dot R^2)$ and $\dot A(P)$ contains these functions which on some neighborhood of  $(A_P,B_P)$ have values in  $\dot A(\dot R^2)$.
\end{remark}
According to \cite[Lemma 2.2.1]{z}, every  cut  of $R$ determines a lower cut
$$S = \{v(b-a):\,\,a \in A,\, b\in B\},$$
 in $\Gamma$.
Note that if $(A, B)$ is a normal cut, then $S= \Gamma$ and if $(A, B)$ is an improper cut, then $S= \emptyset$. The sets $S$ allow us to compare gaps.
We can say that a gap $(A_1,B_1)$ is "bigger" then   $(A_2,B_2)$ if  $S_1\subset S_2$. In a similar way one can 
 investigate a "distance" between two cuts $(A_1,B_1)$ and  $(A_2,B_2)$.  Suppose that $A_1 \subset A_2$ and consider the set
$$U = \{v(a'-a):\,\,a, a' \in B_1 \cap A_2, \, \, a<a'\}.$$ Suppose that $S_1$ and $S_2$ are  the lower cuts in $\Gamma$ determined by $(A_1,B_1)$ and  $(A_2,B_2)$.  
\begin{lemma}\label{u}
 $U$ is an upper cut in $\Gamma$. Moreover, $\Gamma \setminus (S_1\cap S_2) \subset U$.
\end{lemma}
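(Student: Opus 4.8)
My goal is to prove two things: that $U = \{v(a'-a) : a, a' \in B_1 \cap A_2,\ a < a'\}$ is an upper set (upper cut) in $\Gamma$, and that $\Gamma \setminus (S_1 \cap S_2) \subseteq U$. Throughout, recall that $A_1 \subset A_2$, so $B_1 \cap A_2$ is exactly the "interval" $[X \text{ between the two cuts}]$ — the set of elements of $R$ that are $> $ everything corresponding to the first cut's lower part but still $<$ the second cut's position. The key technical facts I will lean on are: $v$ is a valuation (so $v(x+y) \geq \min\{v(x), v(y)\}$ with equality when $v(x) \neq v(y)$), that $v$ is order-compatible with $\dot R^2$ in the sense that $0 < x \leq y \Rightarrow v(x) \geq v(y)$, and the description of $S_i$ from \cite[Lemma 2.2.1]{z} as $S_i = \{v(b-a) : a \in A_i,\ b \in B_i\}$.

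\textbf{Step 1: $U$ is an upper cut.} Take $\gamma \in U$, realized as $\gamma = v(a' - a)$ with $a < a'$ both in $B_1 \cap A_2$, and take any $\delta \in \Gamma$ with $\delta \geq \gamma$. I must produce $a'', a''' \in B_1 \cap A_2$ with $v(a''' - a'') = \delta$. Since $\delta \geq \gamma = v(a'-a)$, and $0 < a' - a$, I can find a positive element $t \in R$ with $v(t) = \delta$ and $t \leq a' - a$ (using that $R$ is real closed, hence has enough elements: pick any $s$ with $v(s) = \delta$, replace $s$ by $|s|$, and if needed note $\delta \geq \gamma$ forces $0 < |s| \leq$ something comparable to $a'-a$ up to a unit — more carefully, scale by a unit of value $0$, or simply observe that from $\delta \ge v(a'-a)$ one gets an element of value $\delta$ bounded by $a'-a$). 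Then set $a'' = a$ and $a''' = a + t$; since $a \leq a''' \leq a'$ and $B_1 \cap A_2$ is convex (it is the intersection of an upper set $B_1$ and a lower set $A_2$), we get $a''' \in B_1 \cap A_2$, and $v(a''' - a'') = v(t) = \delta$. Hence $\delta \in U$, so $U$ is upward closed; the convexity observation is the crux here.

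\textbf{Step 2: $\Gamma \setminus (S_1 \cap S_2) \subseteq U$.} Let $\delta \in \Gamma$ with $\delta \notin S_1 \cap S_2$, say $\delta \notin S_1$ (the case $\delta \notin S_2$ is symmetric). Since $S_1$ is a lower cut, $\delta \notin S_1$ means $\delta < v(b - a)$ is false for the relevant witnesses — rather, $\delta$ exceeds or sits above all of $S_1$, i.e. $\delta \geq s$ for all $s \in S_1$ is \emph{not} quite it; precisely, $\delta \notin S_1$ and $S_1$ lower means every $s \in S_1$ satisfies $s < \delta$ or is incomparable-below, but cleanly: for all $a \in A_1, b \in B_1$ we have $v(b-a) \in S_1$, hence $v(b-a) < \delta$ or at least $v(b - a) \not\geq \delta$. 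I now want to extract $a', a'' \in B_1 \cap A_2$ with $v(a'' - a') = \delta$. The idea: pick any element $t > 0$ of $R$ with $v(t) = \delta$. Take $a \in B_1 \cap A_2$ (nonempty since $A_1 \subsetneq A_2$, so there is an element of $A_2$ not in $A_1$, i.e. in $B_1$). I claim $a + t \in A_2$ for a suitable choice: if $a + t \in B_2$ then $v((a+t) - a') \in S_2$ for $a' \in A_2$... I need to instead argue that the "gap" $B_1 \cap A_2$ is wide enough to contain a pair at distance $\delta$. Because $\delta \notin S_1$, the first cut is "narrower than $\delta$": there exist $a_0 \in A_1$, $b_0 \in B_1$ with $v(b_0 - a_0)$ "reaching up to" near $\delta$, hmm — actually the clean route is: $\delta \notin S_1$ means $\delta$ is an upper bound position for $S_1$, so for every $a\in A_1$ there is no $b \in B_1$ with $v(b-a) \geq \delta$... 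I will instead show the contrapositive: if $\delta \notin U$, then $\delta \in S_1$ and $\delta \in S_2$. If $\delta \notin U$, then for all $a < a'$ in $B_1 \cap A_2$, $v(a' - a) \neq \delta$, and combined with Step 1 ($U$ upward closed, so $\Gamma \setminus U$ downward closed) we get $v(a'-a) > \delta$ for all such pairs — meaning $B_1 \cap A_2$ has "diameter below $\delta$" in the valuation sense. Then for $a \in A_1$ and $b \in B_2$, writing the difference through a point $c \in B_1 \cap A_2$ as $b - a = (b - c) + (c - a)$ and controlling valuations, I should conclude $v(b-a)$ can be made $\geq \delta$, placing $\delta$ in both $S_1$ and $S_2$.

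\textbf{Main obstacle.} The delicate point is Step 2: correctly handling the interaction between the lower cuts $S_i$ in $\Gamma$ (which encode how "normal" each cut is) and the valuation-theoretic width of the intermediate set $B_1 \cap A_2$, especially the non-Archimedean subtleties when $v(x) = v(y)$ versus $v(x) \neq v(y)$, and the edge cases where one of the cuts is improper (so $S_i = \emptyset$) or where $B_1 \cap A_2$ might fail to contain two distinct comparable elements at the needed valuation. I expect the argument will run most smoothly via the contrapositive as sketched, using that $\Gamma \setminus U$ is a lower set (immediate once Step 1 is done), translating "$v(a'-a) > \delta$ throughout $B_1 \cap A_2$" into membership of $\delta$ in $S_1 \cap S_2$ by the triangle-inequality-style valuation estimate.
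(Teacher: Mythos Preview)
Your Step 1 is correct and matches the paper's argument: for $\delta > \gamma$, take any $c > 0$ with $v(c) = \delta$; then automatically $c < a' - a$ (since $v(c) > v(a'-a)$ forces $c/(a'-a)$ into the maximal ideal, hence below $1$), so $a + c$ lies between $a$ and $a'$ in the convex set $B_1 \cap A_2$. Your worry about manufacturing $t \leq a' - a$ is unnecessary once you restrict to the strict case $\delta > \gamma$, the case $\delta = \gamma$ being trivial.

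In Step 2 your execution drifts. In the direct attempt you assume $\delta \notin S_1$, pick $t > 0$ with $v(t) = \delta$ and $a \in B_1 \cap A_2$, and then examine $a + t$; but $a + t \in B_2$ would only contradict $\delta \notin S_2$, which is the other case. For $\delta \notin S_1$ the correct move is to go \emph{left}: consider $a - t$. Since $a - t < a \in A_2$ we have $a - t \in A_2$; and if $a - t \in A_1$ then $\delta = v\bigl(a - (a-t)\bigr) \in S_1$, a contradiction, so $a - t \in B_1 \cap A_2$ and $\delta = v\bigl(a - (a-t)\bigr) \in U$. Symmetrically, for $\delta \notin S_2$ go right with $a + t$. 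This is exactly the paper's two-line proof. Your contrapositive sketch via the decomposition $b - a = (b-c) + (c-a)$ with $a \in A_1$, $b \in B_2$ does not reach $\delta \in S_1$ (it concerns the wrong pair of sets); the contrapositive that does work is again the same translate-by-$t$ trick run backwards, so nothing is gained by switching.
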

\begin{proof}
 We shall show that if $\gamma \in U$ and  $\gamma' \in \Gamma$ and $\gamma< \gamma'$, then $\gamma' \in U$.
We have $\gamma  = v(a'-a)$, where $a$ and $a'$ are as in the definition of $U$ and $\gamma' = v(c)$, where $c$ is a positive element of $R$. Since $v(a'-a) < v(c)$, we have $v(\frac{c}{a' - a}) \in I(\dot R^2)$. Then $\frac{c}{a' - a} <1$.
So $a+c <a'$. Thus $a+c \in B_1 \cap A_2$. We have $v(a+c\,-\,a) = v(c) = \gamma' \in U$.

Now suppose that  $\gamma \in \Gamma \setminus (S_1\cap S_2) $. Let $c$ be a positive element of $R$ with $v(c) = \gamma$. Fix an element $a \in B_1 \cap A_2$. Assume that  $\gamma  \notin S_1$. Then $a - c \in  B_1 \cap A_2$. Thus $v(a\,-\, (a-c)) = v(c) = \gamma \in U$. If $\gamma  \notin S_2$, then $a + c \in  B_1 \cap A_2$ and $v(a+c\,-\, a) = v(c) = \gamma \in U$.
\end{proof}
\begin{theorem}\label{rx}
 Let  $(A_1,B_1)$ and  $(A_2,B_2)$ be the cuts in $R$ corresponding to  the orders $P_1$ and $P_2$ of $R(X)$, respectively.  Let $S_1$, $S_2$ and $U$ be the cuts in $\Gamma$ defined above and let $\lambda:\mathcal X(R(X))\rightarrow M(R(X))$ be the canonical map. 
\begin{enumerate}
 \item If $S_1 \neq S_2$, then $\lambda(P_1)  \neq \lambda(P_2)$.
\item  If $S_1 = S_2 = S$ and $S\cap U \neq \emptyset$, then $\lambda(P_1)  \neq \lambda(P_2)$.
\item  If $S_1 = S_2 = S$ and $S\cap U = \emptyset$,  then $\lambda(P_1) = \lambda(P_2)$.
\end{enumerate}
\end{theorem}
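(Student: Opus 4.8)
The whole argument runs through the Separation Criterion (Theorem~\ref{sc}) together with Remark~\ref{v}. By the symmetry of the statement one may assume $A_1\subsetneq A_2$ (if $A_1=A_2$ then $P_1=P_2$ and there is nothing to prove). The case in which one of the cuts is improper is easy and should be handled separately by a direct limit computation (for $P_\infty^-$ versus a finite cut a function of the form $\tfrac{X-s}{X-t}$ with $s,t$ straddling the second cut separates, and $P_\infty^-$ versus $P_\infty^+$ does not separate because the leading quotient of any admissible $f$ has the same sign near $-\infty$ and near $+\infty$). The case in which $(A_1,B_1),(A_2,B_2)$ are the two principal cuts at a single point $a$ is immediate: every neighborhood of either cut contains $a$, so no $f\in R(X)$ can be positive near one and negative near the other, hence $\lambda(P_1)=\lambda(P_2)$; this disposes of the sub-case $U=\emptyset$. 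In all remaining cases $B_1\cap A_2$ is an order-convex set with at least two points lying strictly between the cuts, and by Theorem~\ref{sc} and Remark~\ref{v} the inequality $\lambda(P_1)\ne\lambda(P_2)$ holds if and only if there is $f\in R(X)$ which is positive with values in $\dot A(\dot R^2)$ on some neighborhood of $(A_1,B_1)$ (i.e.\ $f\in\dot A(P_1)\cap P_1$) and negative on some neighborhood of $(A_2,B_2)$ (i.e.\ $-f\in P_2$). Thus parts (1) and (2) ask us to construct such an $f$ and part (3) asks us to rule one out.

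For (1) and (2) I would first use Lemma~\ref{u} to isolate a critical value $\gamma\in\Gamma$ witnessing the separation. In case (2) take $\gamma\in S\cap U$ directly; in case (1), since the lower cuts of $\Gamma$ are totally ordered by inclusion we may assume $S_1\subsetneq S_2$ and pick $\gamma\in S_2\setminus S_1$, whence $\gamma\notin S_1\cap S_2$ and Lemma~\ref{u} gives $\gamma\in U$. Now $\gamma\in U$, together with the order-convexity of $B_1\cap A_2$ and the surjectivity of $v$, supplies elements of $B_1\cap A_2$ whose mutual valuation-distances are at the scale $\gamma$; the membership of $\gamma$ in one of $S_1,S_2$ records at which scale the relevant cut can, or cannot, be trapped in a neighborhood, and placing the chosen elements (and, when needed, a point $w\in A_1$) accordingly one writes down an explicit low-degree rational function — essentially a M\"obius transformation of the shape $1+\tfrac{u-X}{u'-u}$, or $\tfrac{u'-X}{X-w}$, or a quadratic $(u'-X)(X-w)$ in the more awkward configurations — which on a neighborhood of $(A_1,B_1)$ is a positive quantity with $v$-value $0$ and bounded $\mathbb{Q}$-archimedean absolute value (hence lies in $\dot A(P_1)\cap P_1$), while on a neighborhood of $(A_2,B_2)$ the argument $X$ lies beyond $u'$ so the function is negative (hence $-f\in P_2$). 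The Separation Criterion then yields $\lambda(P_1)\ne\lambda(P_2)$. This is routine once $\gamma$ and the auxiliary points are fixed, but it does break into a handful of sub-cases according to whether each cut is principal, a normal gap, or an abnormal gap, and according to which side of a cut is ``thick''.

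For (3) I would argue by contradiction. Suppose $f\in\dot A(P_1)\cap P_1$ and $-f\in P_2$. Then $f$ is positive with $v(f(c))=0$ for $c$ in some neighborhood $(a_1,b_1)$ of $(A_1,B_1)$, and negative on some neighborhood of $(A_2,B_2)$. Since $(A_1,B_1)$ precedes $(A_2,B_2)$ and $f$ is a rational function, positive near the first cut and negative near the second, $f$ changes sign on $B_1\cap A_2$, hence has a real zero or pole of odd order there. In case (3) with $U\ne\emptyset$ one checks, using $S_1=S_2=S$, that neither cut is principal (a principal cut would force $S=\Gamma$, hence $S\cap U=U\ne\emptyset$), so both are abnormal gaps; in particular the positions of the cuts are not elements of $R$. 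Let $r_0$ be the least real zero-or-pole of $f$ above $(A_1,B_1)$ at which $f$ changes sign; then $r_0\in B_1\cap A_2$, and after shrinking $(a_1,b_1)$ we may take $b_1<r_0$, so $f$ has neither a zero nor a pole on $(a_1,b_1)$. The crux is that $f$ is a $v$-unit throughout $(a_1,b_1)$ while $v(f(X))\to\pm\infty$ as $X\uparrow r_0$: this forces $(a_1,b_1)$ to be narrow relative to its distance to $r_0$, namely $v(b_1-a_1)\ge v(r_0-b_1)=:\gamma_0$, and therefore $\gamma_0\in S_1=S$. On the other hand $b_1,r_0\in B_1\cap A_2$ with $b_1<r_0$, so $\gamma_0=v(r_0-b_1)\in U$. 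Hence $\gamma_0\in S\cap U$, contradicting the hypothesis of (3).

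I expect the main obstacle to be precisely the valuation estimate in (3), that $f$ being a $v$-unit on $(a_1,b_1)$ forces $v(b_1-a_1)\ge v(r_0-b_1)$. When $f$ has several real zeros and poles this already requires care, and it is genuinely delicate because of the positive-definite quadratic factors in the factorization of $f$ over $R$: these alter $v(f)$ without altering the sign of $f$, so the relevant scale cannot be read off from the sign pattern alone. The remedy is to analyze $v(f(X))$ for $X$ near $(A_1,B_1)$ multiplicatively — tracking the contribution of each linear factor $X-r_i$ and of each irreducible quadratic $Q_j$ on the interval, and showing that the scale at which $v(f)$ first leaves $0$ is governed by the factors responsible for the degeneration at $r_0$. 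Carrying out this estimate, the separate (but easy) treatment of the improper cuts, and the sub-case bookkeeping in the constructions of (1) and (2), is where the real work lies; the rest is a direct application of Theorem~\ref{sc}.
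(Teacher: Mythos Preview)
Your treatment of (1) and (2) is essentially the paper's: choose the right scale $\gamma$ and write down an explicit linear separating function. The paper is slightly more direct in (1) (it picks $a\in B_1\cap A_2$, $b\in B_2$ with $v(b-a)\notin S_1$ and takes $f(X)=\tfrac{X-a}{b-a}+1$, without routing through $U$), but the content is the same.

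For (3) you take a genuinely different route, and the argument as written has a real gap. The inequality $v(b_1-a_1)\ge v(r_0-b_1)$ does \emph{not} follow from $f$ being a $v$-unit on $(a_1,b_1)$: positive-definite quadratic factors can hold $v(f)$ at $0$ across the whole interval even when $r_0$ sits much closer to $b_1$ than the width $b_1-a_1$. Concretely, take $\Gamma=\mathbb{Q}$, $v(t)=1$, let $(A_1,B_1)$ and $(A_2,B_2)$ be the two abnormal gaps bounding the infinitesimals (so $S=\{\gamma\le 0\}$, $U=\{\gamma>0\}$, $S\cap U=\emptyset$), and set
\[
f(X)=\frac{(X-t)(X-2t)}{X^{2}+t^{2}},\qquad a_1=-1,\quad b_1=-t,\quad r_0=t.
\]
A direct computation gives $v(f(x))=0$ and $f(x)>0$ for every $x\in(-1,-t)$, so $f\in\dot A(P_1)\cap P_1$; yet $v(b_1-a_1)=0<1=v(r_0-b_1)$. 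Your claimed estimate fails precisely because of the cancellation you flag as delicate, and the remedy you propose---tracking each factor's contribution to $v(f)$---is exactly where the compensation occurs. Of course this particular $f$ is not a separator (it is positive again beyond $2t$), but that is what must be \emph{proved}; your mechanism for proving it does not work.

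The paper's argument for (3) sidesteps this entirely. After translating so that $0\in B_1\cap A_2$ and noting the resulting symmetry $B_1\cap A_2=-(B_1\cap A_2)$, it shows that the extended valuations $v_{\pm}$ satisfy $v(a)>v_{\pm}(X)>v(b)$ for all $a\in B_1\cap A_2$ and $b\in A_1\cup B_2$; since $\Gamma$ is divisible this forces $n\cdot v_{\pm}(X)\notin\Gamma$ for every $n\ge 1$. One then computes $v_{\pm}$ on each linear factor $X-c$ and each irreducible quadratic $(X-c)^2+d^2$ explicitly and finds that the answers for $v_{+}$ and $v_{-}$ agree factor by factor, giving $\dot A(P_1)=\dot A(P_2)$. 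A parity count of the linear factors with root in $B_1\cap A_2$ then yields $\dot A(P_1)\cap P_1=\dot A(P_2)\cap P_2$. This never attempts to bound the location of a single sign change from aggregate $v$-unit information, which is why the compensation problem does not arise.
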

\begin{proof} Without lost of generality we can assume that $A_1\subset A_2$.\\
 (1) Suppose  that $S_1 \subset S_2$. Then there exist $a \in B_1\cap A_2$ and $b \in B_2$ such that $v(b-a) \notin S_1$. Consider a linear polynomial $f(X) = \frac{X-a}{b-a}+1$. This polynomial has a root  $x_0 = a - (b-a)$. If $x_0 \in A_1$, then $v(a -x_0) = v(b-a) \in S_1$  - a contradiction. Therefore $x_0 \in B_1$.  Moreover,
$f(a) = 1$ and $f(b) = 2$. Therefore $f$ has positive values in some neighbourhood of  $(A_2,B_2)$ which are units in $A(\dot R^2)$ and negative values in some neighbourhood of  $(A_1,B_1)$. By Remark \ref{v} and Separation Criterion 
 $\lambda(P_1)  \neq \lambda(P_2)$. If  $S_2 \subset S_1$, then $f$ can be defined as a suitable, decreasing linear polynomial.\\
(2) Let $\gamma \in U \cap S$. Then there exist $a\in A_1$, $b \in B_1$ and $c,d \in B_1 \cap A_2$ such that $\gamma =  v(b-a) = v(d-c)$. 

 We shall show that one can fix $\gamma$ in  such a way that $ a<b\leqslant c<d$. If $c<b$, then  we take $\gamma'= v(c-a)$. We have $\gamma '\geqslant \gamma$, so $\gamma' \in U$. If $\gamma' > \gamma$, then  $v(c-a) >v(d-c)$.
Thus
$c-a <d-c$ and $c +(c-a)< d$. Therefore $c +(c-a) \in B_1 \cap A_2$. So we can take $\gamma '$ as $\gamma$, $c$ as  $b$ and $c +(c-a)$ as  $d$. 

Since   $v(b-a) = v(d-c)$, there exists $n \in \mathbb N$ such that $\frac{1}{n} <\frac{b-a}{d-c}< n$. Then $\frac{b-a}{n} < d-c$. Consider a linear polynomial $f(X)$ such that $f(a) = n+1$ and $f(b) = 1$ i.e. $f(X) = \frac{n(b-X)}{b-a} +1$. This polynomial has a root  $x_0 = b + \frac{b-a}{n}< b+ d-c <d$. Thus $f$ has positive values in a neighborhood of  $(A_1,B_1)$ which are units in $A(\dot R^2)$ and negative values in a neighborhood of  $(A_2,B_2)$. Using Separation Criterion we get  $\lambda(P_1)  \neq \lambda(P_2)$.\\
(3) By \cite [Corollary 2,13 ]{l} it suffices to show that $\dot A(P_1) \cap P_1 = \dot A(P_2) \cap P_2.$\\
(i) From Lemma \ref{u} we have $U = \Gamma \setminus S$.\\
(ii) We can assume that $0 \in  B_1\cap A_2$. 

If $a \in B_1\cap A_2$, then we can consider the cuts: $(A_1 -a,  B_1 -a)$ and $(A_2 -a,  B_2 -a)$. Then  $f(X)$ is a "separation" function for $(A_1,  B_1 )$ and $(A_2,  B_2)$ if and only if $f(X+a)$ is a "separation" function for 
 $(A_1 -a,  B_1 -a)$ and $(A_2 -a,  B_2 -a)$. Therefore  the $\mathbb R$- places determined by orders associated to 
$(A_1 -a,  B_1 -a)$ and $(A_2 -a,  B_2 -a)$ are equal if and only if $\lambda(P_1) = \lambda(P_2)$.\\
(iii) $(A_1,  B_1 )$ and $(A_2,  B_2)$  are symmetric in respect to 0 i.e. if $a \in B_1\cap A_2$, then  $-a \in B_1\cap A_2$.

If  $a \in B_1\cap A_2$ and $-a \in A_1$, then  $S \ni v(a) = v(-a) \in U$ - a contradiction with  $S\cap U = \emptyset$.\\
(iv) Let $A:= B_1\cap A_2$ and $B = A_1 \cup B_2$. Then $v(A) = U \cup\{\infty\}$ and $v(B) = S$.
Let $v_+$ be a valuation determined by order $P_1$ and let $v_-$ be a valuation determined by order $P_2$.
The valuations $v_1$ and $v_2$ are extensions of $v$. We have
$$v(a) > v(b),\,\,\text{ for }\,\, a \in A, b\in B.$$
Therefore
$$v(a) \geqslant v_{\pm}(X) \geqslant v(b),\,\,\text{for}\,\, a \in A, b\in B.$$
(v) A valuation group of $v_{\pm}$ is bigger then $\Gamma$. In fact
$$v(a) > v_{\pm}(X) > v(b),\,\,\text{for}\,\, a \in A, b\in B.$$

If $v_{\pm}(X) = v(a)$ for some $a \in A$ then $v_{\pm}(\frac{X}{a} )= 0$. Thus the function $\frac{X}{a}$ has Archimedean values in some neighbourhood of $(A_1,  B_1 )$.   Therefore there exists $b \in B$ such that $v(\frac{b}{a}) = 0$. So $v(b) = v(a)$ - a contradiction. Similarly $v_{\pm}(X) \neq v(b)$.\\
(vi) We shall check the values of valuations $v_{\pm}$ on linear and quadratic polynomials:\\
$v_{\pm}(X - a) = v_{\pm}(X)$ for $a \in A$;\\
$v_{\pm}(X - b) = v(b)$ for $b \in A$;\\
$v_{\pm}((X - c)^2 + d^2) =
\left\{ \begin {array}{l}
2v_{\pm}(X)\text{ if } c,d \in A\\
2v(d)\text{ if } c \in A,d \in B\\
2v(c)\text{ if } c \in B,d \in A\\
2\min\{v(c),v(d)\}\text{ if } c, d\in B\\
\end {array}\right.$.\\
(vii) Since $\Gamma $ is a divisible group, $n\cdot v_{\pm}(X ) \notin \Gamma$ for $n \in \mathbb N$.
Thus for every polynomial $f \in R[X]$, there exist $a\in R$ and $n \in \mathbb N$ such that
 $v_{\pm}(f ) = v(a) + n\cdot v_{\pm}(X)$ and $v_{\pm}(f ) = 0$ if and only if  $v(a) = 0$ and $n=0$. Therefore $\dot A(P_1) = \dot A(P_2).$ \\
(vii) Let $F \in \dot A(P_1)\cap P_1$. Then $F$ has a representation:
$$F = \frac{f_1}{f_2} = \frac{c(X-c_1)\cdot....(X-c_k)\cdot[\text{a product of sum of squares}]}{d(X-d_1)\cdot....(X-d_k)\cdot[\text{a product of sum of squares}]}$$
Since $v_{\pm}(F) = 0$, 
$$\sharp \{c_i \in A: i=1,...k\}-\sharp \{d_i \in A: i=1,...l\}\equiv 0\,\,(\text{mod }2).$$
Thus $f_1\cdot f_2$ has an even number of roots in $A$.   Therefore $F \in  P_2$.
\end{proof}
\begin {remark}
The Theorem \ref{rx} shows that the map $\lambda$ glues only the abnormal  gaps which are close one  to each other. For example, the orders determined by gaps $(A_1,B_1)$ and $(A_2,B_2)$ where\\
$A_1=\{a\in  -R^2 : a\notin I(\dot R^2)\}$, $B_1=\{a\in  -R^2 : a\in I(\dot R^2)\}\cup R^2$,\\
$A_2= \{-R^2 \cup\{a\in  R^2 : a\in I(\dot R^2)\}$, $B_2=\{a\in  R^2 : a\notin I(\dot R^2)\}$,\\
are always  glued, since then  $S_i = \{\gamma \in  \Gamma:\;\; \gamma \leq 0\}$ for $i=1,2$ and\\ $U = \{\gamma \in  \Gamma:\;\; \gamma > 0\}$.
\end{remark}
\begin{remark}
 The orders $P_a^+$ and $P_a^-$ determine the same $\mathbb R$-place, since $U = \emptyset$. Also  $P_{\infty}^+$ and $P_{\infty}^-$ determine the same $\mathbb R$-place, since $S = \emptyset$.
\end{remark}

\begin {remark}
 If $R$ is a real closed subfield of $\mathbb R$, then every cut of $R$ is normal. By Theorem \ref{rx} the space $M(R(X))$ is homeomorphic to $M(\mathbb R(X))$.
\end {remark}
The above remark can be easily generalized if one replace $R$ and $\mathbb R$ by any real closed field $K$ and its continuous closure $\tilde R$. Let us recall a definition from \cite{ba}.

\begin{definition} 
The ordered field $K$ is called {\em continuous closed} if every normal cut in $K$ is principal.
We say that an ordered field $\tilde K$ is {\em a continuous closure of $K$} if $\tilde K$ is continuous closed and $K$ is dense in $\tilde K$.
\end{definition}

The continuous closure $\tilde K$ is uniquely determined for every ordered field $K$. Moreover, if $K$ is real closed, then $\tilde K$ is also real closed (see \cite{ba}).

\begin{theorem}\label{rc}
 Let $R$ be a real closed field and let  $\tilde R$  be its  continuous closure. Then spaces $M(R(X))$ and $M(\tilde R(X))$ are homeomorphic. 
\end{theorem}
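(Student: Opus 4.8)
The plan is to exhibit the homeomorphism as the map induced on spaces of $\mathbb R$-places by restriction of orders along the inclusion $R(X)\subseteq\tilde R(X)$. Since $R$ is dense in $\tilde R$ and $\tilde R$ is real closed, the assignment $\tilde P\mapsto\tilde P\cap R(X)$ defines a map $\rho\colon\mathcal X(\tilde R(X))\to\mathcal X(R(X))$; under the correspondence between orders and Dedekind cuts it is simply $(\tilde A,\tilde B)\mapsto(\tilde A\cap R,\tilde B\cap R)$. First I would record the routine facts: $\rho$ is order-preserving; it is continuous because $\rho^{-1}(H_{R(X)}(f))=H_{\tilde R(X)}(f)$; it is surjective because the upward closure in $\tilde R$ of the upper part of any cut of $R$ is a cut of $\tilde R$ restricting to it; and the valuations attached to $\dot R^2$ and to $\dot{\tilde R}^2$ share the value group $\Gamma$, since for $t\in\tilde R$ with $t>0$ density yields $r\in R$ with $t/2<r<t$, whence $\tilde v(t)=v(r)\in\Gamma$.

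The heart of the argument is that $\rho$ preserves the two invariants that govern gluing in Theorem \ref{rx}. First, if $(\tilde A,\tilde B)$ restricts to $(A,B)$, then the associated lower cuts in $\Gamma$ coincide, $S=\tilde S$: the inclusion $S\subseteq\tilde S$ is immediate, and for the reverse, given $\tilde a\in\tilde A$, $\tilde b\in\tilde B$, one uses that when $(\tilde A,\tilde B)$ is a gap both $\tilde A$ and $\tilde B$ have elements arbitrarily near the cut, so density produces $a\in A$, $b\in B$ with $0<b-a<\tilde b-\tilde a$; then $v(b-a)\geq\tilde v(\tilde b-\tilde a)$, and since $S$ is downward closed and contains $v(b-a)$ it contains $\tilde v(\tilde b-\tilde a)$ (the case of a principal cut of $\tilde R$ is trivial, $S=\tilde S=\Gamma$). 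Symmetrically, for two cuts of $\tilde R$ with $\tilde A_1\subseteq\tilde A_2$ and their restrictions, the upper cuts $U$ of Lemma \ref{u} coincide: pushing a pair $\alpha<\alpha'$ in $\tilde B_1\cap\tilde A_2$ slightly outward by density gives $a<a'$ in $B_1\cap A_2$ with $a'-a\geq\alpha'-\alpha$, and upward closedness of $U$ does the rest; the degenerate case $U=\emptyset$ is handled by noting that $\tilde B_1\cap\tilde A_2$ and $B_1\cap A_2$ are convex and $R$ is dense, so one is a singleton (or empty) iff the other is. I expect this bookkeeping to be the only real obstacle — in particular one should observe that the cuts of $R$ whose $\rho$-fibre is larger than a point are exactly the normal non-principal ones, each filled in $\tilde R$ by a single new element $t$, so that $\rho$ identifies precisely the new $\prec$-step $P_t^-\prec P_t^+$, which $\lambda_{\tilde R}$ already collapses.

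Once $S=\tilde S$ and $U=\tilde U$ are established, Theorem \ref{rx} reads the same way on $R(X)$ and on $\tilde R(X)$, so for all orders $\tilde P_1,\tilde P_2$ of $\tilde R(X)$ we get $\lambda_{\tilde R}(\tilde P_1)=\lambda_{\tilde R}(\tilde P_2)$ if and only if $\lambda_R(\rho\tilde P_1)=\lambda_R(\rho\tilde P_2)$. Hence $\lambda_R\circ\rho\colon\mathcal X(\tilde R(X))\to M(R(X))$ is constant on, and separates, the fibres of $\lambda_{\tilde R}$, so it descends to a bijection $\bar\rho\colon M(\tilde R(X))\to M(R(X))$, which is onto because $\lambda_R$ and $\rho$ are. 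Finally, $M(\tilde R(X))$ carries the quotient topology of $\lambda_{\tilde R}$ and $\bar\rho\circ\lambda_{\tilde R}=\lambda_R\circ\rho$ is continuous, so $\bar\rho$ is continuous; as $M(\tilde R(X))$ is compact and $M(R(X))$ is Hausdorff, $\bar\rho$ is the desired homeomorphism.
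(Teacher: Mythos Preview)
Your argument is correct and follows essentially the same route as the paper: both proofs use the restriction map along $R(X)\subseteq\tilde R(X)$, show it is a continuous bijection between the compact Hausdorff spaces $M(\tilde R(X))$ and $M(R(X))$, and establish bijectivity by checking that the invariants $S$ and $U$ of Theorem~\ref{rx} are preserved under restriction of cuts (using density of $R$ in $\tilde R$ to transfer witnesses). The only cosmetic difference is that the paper works directly with the place-level restriction $\omega(\xi)=\xi|_{R(X)}$ and quotes its continuity from \cite{o2}, whereas you descend from the order-level map $\rho$ via the quotient topology; your treatment of $S=\tilde S$ and $U=\tilde U$ is in fact more explicit than the paper's one-line appeal to density.
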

\begin{proof}
 By \cite{o2}, the restriction map $\omega:M(\tilde R(X))\longrightarrow M(R(X))$, $\;\;\omega(\xi) = \xi \mid _{R(X)}$ is continuous. It suffices to show that it is a bijection.

Fix $\xi \in M(R(X))$. There is $P \in \mathcal X(R(X))$ such that $\lambda_{R(X)}(P)=\xi$. Let $(A_P,B_P)$ be a cut in $R$ corresponding to $P$. Let $\tilde A = \{\tilde a \in \tilde R: \exists_{a \in A_P} \;\; \tilde a < a\}$ and
let $\tilde B$ be the completion of $\tilde A $ in $\tilde R$.
 If $(\tilde A, \tilde B)$ is a cut (i.e. if $(A_P,B_P)$ is principial or abnormal gap), then the order $\tilde P$ corresponding to $(\tilde A, \tilde B)$ and restricted to $R(X)$ is equal to $P$. 
Otherwise, there exists a unique element $\tilde c \in \tilde R$ such that 
$$\forall_{a \in \tilde A}\;\; \forall_{b \in \tilde B}\;\;a < \tilde c < b.$$ Then $(\tilde A \cup \{\tilde c\}, \tilde B)$ is a cut in $\tilde R$. Let $\tilde P$ be an order corresponding to this cut. Observe, that $\tilde P \cap R(X) = P$.  In fact, if $f \in R(X)$ is positive on some neighborhood of $(\tilde A \cup \{\tilde c\}, \tilde B)$, then
it takes positive values on some neighborhood of $( A_P, B_P)$ (note that $\tilde c$ is not algebraic over $R$ since $R$ is real closed). Then in both cases, $\lambda_{\tilde R(X)}(\tilde P) \mid_{R(X)} = \xi$. Thus $\omega$ is surjective.

To show injectivity of $\omega$ it suffices to observe the following. Let $\tilde \Gamma$ be a value group of valuation $v$ corresponding to the unique ordering of $\tilde R$. Let $\tilde A \subset \tilde \Gamma$. Assume that $v(\tilde c) \in \tilde A$ for some $\tilde c \in \tilde R$. By density of $R$ in $\tilde R$ in any sufficiently small neighborhood of $\tilde c$ one can find $c \in R$ such that $v(c) = v(\tilde c)$. Thus $\{v(c): \;\;c \in R\} \cap \tilde A = \tilde A$.  Now one can use Theorem \ref{rx} (i) and (ii). 
\end{proof} 

\section {Completeness and continuity}

\begin {notation}
 Let $k$ be a field, $\Gamma $ be a linearly ordered abelian group. The field of (generalized)  power series $k((\Gamma))$ is the set of formal series \begin{displaymath}
a = \sum_{\gamma \in \Gamma} a_{\gamma} x^{\gamma}
\end{displaymath} with well - ordered support $$supp(a) = \{ \gamma:\;\; a_{\gamma} \neq 0\}.$$
Sum and multiplication  are defined as follow:
\begin{displaymath}
 a +b = \sum_{\gamma \in \Gamma} (a_{\gamma}+b_{\gamma}) x^{\gamma};\;\;\;\;\;
ab = \sum_{\gamma \in \Gamma}(\sum_{\delta+\eta = \gamma} a_{\delta}b_{\eta}) x^{\gamma}
\end{displaymath}
The fact that $k((\Gamma))$ is a field was shown by Hahn \cite{hah}. This field is ordered by lexicographic order. The natural valuation $v: k((\Gamma)) \longrightarrow \Gamma \cup\{\infty\}$ corresponding to this order is given by formula
$$v(a) = min\, supp(a).$$

Let $R$ be real closed field, $v: R\longrightarrow \Gamma \cup\{\infty\}$ be the natural valuation of $R$ with the Archimedean residue field $k$. The proof of the following theorem one can find in \cite{z}, compare\cite{kap}.
\begin{theorem}
 There exists a field embedding $R\hookrightarrow k((\Gamma))$ preserving the order and  valuation.
\end{theorem}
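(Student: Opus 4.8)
The plan is to factor the embedding through a copy of the group-ring field $k(\Gamma)$ sitting inside $R$, and then to use that $k((\Gamma))$ is a \emph{maximal} immediate extension of that copy. First I record the ambient facts: since $R$ is real closed, $\Gamma$ is divisible and $k$ is real closed (a convex valuation on a real closed field has real closed residue field), so, being archimedean, $k$ is a real closed subfield of $\mathbb R$; everything is in characteristic $0$; $R$ is Henselian with respect to $v$ (the natural valuation of a real closed field always is, by the intermediate value property for polynomials); and $k((\Gamma))$ is itself real closed (as $k$ is real closed and $\Gamma$ divisible), has $v_{\min}=\min\mathrm{supp}$ as its natural valuation, and is maximally complete.

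Next I would build the two structural ingredients. \emph{A cross-section}: $v$ restricts to a surjective homomorphism $R^{>0}\to\Gamma$ whose kernel is the group of positive units of $A(\dot R^2)$; since $R^{>0}$ and $\Gamma$ are divisible and torsion-free, hence $\mathbb Q$-vector spaces, this surjection is $\mathbb Q$-linear and splits, giving a homomorphism $s\colon\Gamma\to R^{>0}$ with $v\circ s=\mathrm{id}_\Gamma$ (explicitly: fix a $\mathbb Q$-basis of $\Gamma$, lift it to positive elements, and use the unique positive rational powers available in a real closed field). \emph{A coefficient field}: since $R$ is Henselian of equal characteristic $0$, the residue map $A(\dot R^2)\to k$ admits a section $\iota\colon k\hookrightarrow A(\dot R^2)$ onto a subfield; one produces it by Zorn's lemma, taking a subfield of $A(\dot R^2)$ maximal with respect to the residue map being injective on it and checking that its image must be all of $k$ — a missed residue could be lifted, enlarging the subfield, directly in the transcendental case and via Hensel's lemma in the algebraic case. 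As $k\subseteq\mathbb R$, the embedding $\iota$ is automatically order-preserving.

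Then map the group ring $k[\Gamma]$ into $R$ by $\sum c_\gamma x^\gamma\mapsto\sum\iota(c_\gamma)\,s(\gamma)$. This is injective, since summands with distinct exponents have pairwise distinct $v$-values, so a nonzero element has finite value; it respects $v$ and residues, hence also signs (the sign of an element of $k(\Gamma)$ being that of the residue of its leading coefficient). Passing to fraction fields yields an order- and valuation-preserving embedding of $k(\Gamma)$ onto a subfield $F\subseteq R$. By construction $v(F^{\times})=\Gamma=v(R^{\times})$ and the residue field of $F$ is $k$, so $R$ is an \emph{immediate} extension of $F$; likewise $k((\Gamma))$ is an immediate extension of $k(\Gamma)\cong F$, and, being maximally complete, it is a maximal immediate extension of $F$. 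As the residue characteristic is $0$, Kaplansky's theorem on maximal immediate extensions applies: every immediate extension of $F$ embeds, over $F$, into every maximal immediate extension of $F$. Applying this to $R$ produces a valued-field embedding $\phi\colon R\hookrightarrow k((\Gamma))$ over $F$, so $v_{\min}\circ\phi=v$. Finally $\phi$ preserves the order for free: $R$ and $k((\Gamma))$ are both real closed, so their orderings have positive cone equal to the set of nonzero squares, and every field homomorphism respects that.

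The step I expect to be the main obstacle is the passage to the maximal immediate extension: one must verify carefully that $R$ is genuinely immediate over the constructed subfield $F$, and then invoke the right version of Kaplansky's uniqueness/embeddability theorem, noting that its Kaplansky-type hypotheses are vacuous in residue characteristic $0$. The construction of the coefficient field — the only place where Henselianity of $R$ is used — is the other point demanding genuine care; the rest is bookkeeping. A more hands-on alternative to the last two paragraphs is the transfinite ``long division'': expand each $a\in R^{\times}$ as $\sum_{\alpha}\iota(c_\alpha)\,s(\gamma_\alpha)$ by iteratively stripping off leading terms, the delicate points being the treatment of limit stages and the proof that the resulting support is a well-ordered subset of $\Gamma$.
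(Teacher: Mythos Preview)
Your argument is correct. Note, however, that the paper does not supply its own proof of this statement: it simply refers the reader to Zekavat's thesis and to Kaplansky's paper on maximal fields with valuations. Your route---lift the residue field and a cross-section to obtain a copy $F\cong k(\Gamma)$ inside $R$, observe that $R/F$ and $k((\Gamma))/F$ are both immediate with $k((\Gamma))$ maximally complete, and invoke Kaplansky's embedding theorem in residue characteristic~$0$---is precisely the Kaplansky approach the paper points to, so there is nothing to contrast. The ``hands-on'' alternative you sketch at the end (transfinite subtraction of leading terms) is closer in spirit to the explicit power-series description one finds in Zekavat's thesis; either way your outline is sound, and the two points you flag (immediacy of $R/F$ and the existence of the coefficient field via Henselianity) are indeed the places where the real work lies.

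One tiny quibble: the reason $\iota\colon k\hookrightarrow R$ is order-preserving is that $k$ is real closed, so its unique order is given by squares and any field embedding respects it; the inclusion $k\subseteq\mathbb R$ is not what does the job there. This does not affect the argument.
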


\end{notation}

In further part of this chapter we will use the notion of uniform spaces (see \cite{e}, ch.8) and related notions: base of the uniformity (\cite{e}, Chapter 8.1); completeness, Cauchy and convergent nets (\cite{e}, Chapter 8.3). 
 
One of examples of uniform spaces is a field $K$ with a valuation $v$:  the base of uniformity is the family $\{V_{\gamma}:\;\;\gamma \in \Gamma\}$ where   
 $$V_{\gamma} = \{(a,b) \in K \times K:\;\;v(a - b)>  \gamma\}$$ for every $\gamma \in \Gamma$, and $\Gamma$ is a value group of $v$. 

Let us recall definitions of uniform notions in particular case of a field $K$ (with valuation $v$ and value group $\Gamma$).

\begin{definition} $K$ is \textit{complete} if every centered family of closed sets, which contains arbitrarily small sets, has one-point intersection. A family $\mathcal{F}$ contains \textit{arbitrarily small sets} if
$$\forall_{\gamma \in \Gamma} \exists_{F\in \mathcal{F}} \forall_{x,y\in F} \ v(x-y) > \gamma$$

\end{definition}

\begin{definition}
We say that a net $(a^{\sigma} : \sigma \in \Sigma )$ is a Cauchy set if
$$\forall_{\gamma\in \Gamma} \exists_{\sigma_0 \in \Sigma} \forall_{\sigma > \sigma_0} \ v(a^{\sigma} - a^{\sigma_0} ) > \gamma $$
Similarly, we say that a net $(a^{\sigma} : \sigma \in \Sigma )$ is convergent to $a\in K$ if 
$$\forall_{\gamma\in \Gamma} \exists_{\sigma_0 \in \Sigma} \forall_{\sigma > \sigma_0} \ v(a^{\sigma} - a ) > \gamma $$
\end{definition}

\begin{theorem}[\cite{e}, Theorem 8.3.20] 
A uniform space $X$ is complete if and only if every Cauchy net in $X$ is convergent. 
\end{theorem}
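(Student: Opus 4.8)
This is the specialization to a valued field $(K,v)$ of the classical characterization of completeness of uniform spaces, so the plan is simply to run the standard argument, using the definitions of \emph{complete}, \emph{Cauchy net} and \emph{convergent net} recalled above, and to exploit the ultrametric inequality $v(x-y)\geqslant\min\{v(x-z),v(z-y)\}$ wherever closures intervene. Call $F\subseteq K$ \emph{$\gamma$-small} ($\gamma\in\Gamma$) if $v(x-y)>\gamma$ for all $x,y\in F$. The first thing I would record is that the closure of a $\gamma$-small set is again $\gamma$-small: given $x',y'\in\overline F$, the $V_\gamma$-balls about $x'$ and $y'$ meet $F$, yielding $x,y\in F$ with $v(x-x')>\gamma$ and $v(y-y')>\gamma$, and then $v(x'-y')\geqslant\min\{v(x'-x),v(x-y),v(y-y')\}>\gamma$. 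This is the point at which, unlike in a general metric uniformity, no factor is lost.

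For the implication ``complete $\Rightarrow$ every Cauchy net converges'', given a Cauchy net $(a^{\sigma}:\sigma\in\Sigma)$ I would set $F_{\sigma_0}=\overline{\{a^{\sigma}:\sigma\geqslant\sigma_0\}}$. Directedness of $\Sigma$ makes $\{F_{\sigma_0}:\sigma_0\in\Sigma\}$ a centered family of closed sets, and the Cauchy condition together with the closure remark shows it contains arbitrarily small sets (if $v(a^{\sigma}-a^{\sigma_0})>\gamma$ for $\sigma\geqslant\sigma_0$, then the tail, hence $F_{\sigma_0}$, is $\gamma$-small). Completeness gives a point $a$ with $\bigcap_{\sigma_0}F_{\sigma_0}=\{a\}$, and then $a^{\sigma}\to a$: for each $\gamma$ pick $\sigma_0$ with $F_{\sigma_0}$ $\gamma$-small, so $a$ and all $a^{\sigma}$ with $\sigma\geqslant\sigma_0$ lie in $F_{\sigma_0}$, whence $v(a^{\sigma}-a)>\gamma$.

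For the converse, let $\mathcal F$ be a centered family of closed sets containing arbitrarily small sets, and let $\mathcal F'$ be the family of all finite intersections of members of $\mathcal F$, directed by reverse inclusion; its members are nonempty by centeredness, and it still contains arbitrarily small sets. Picking $a^{F}\in F$ for each $F\in\mathcal F'$ gives a net that is Cauchy --- given $\gamma$, take a $\gamma$-small $F_0\in\mathcal F$; for $F\subseteq F_0$ both $a^{F}$ and $a^{F_0}$ lie in $F_0$, so $v(a^{F}-a^{F_0})>\gamma$ --- hence by hypothesis convergent to some $a\in K$. Then $a\in\bigcap\mathcal F$: fixing $F\in\mathcal F'$ and $\gamma$, convergence gives $F_1\in\mathcal F'$ with $v(a^{F'}-a)>\gamma$ whenever $F'\subseteq F_1$; taking $F'=F\cap F_1\in\mathcal F'$ produces a point $a^{F'}\in F$ with $v(a^{F'}-a)>\gamma$, so the $V_\gamma$-ball about $a$ meets $F$, and since $F$ is closed and $\gamma$ arbitrary, $a\in F$. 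Uniqueness of the intersection point is immediate: if $a\neq b$ both lay in $\bigcap\mathcal F$, a $v(a-b)$-small member $F\in\mathcal F$ containing them would force $v(a-b)>v(a-b)$.

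I expect the only mildly delicate steps to be the ``closure is still $\gamma$-small'' lemma --- which is exactly where the valuation, as opposed to a mere metric, is used --- and the bookkeeping of passing from $\mathcal F$ to the directed family $\mathcal F'$ of finite intersections, so that the constructed net is indexed by a genuine directed set of nonempty sets; everything else is a direct unwinding of the definitions. In full generality this is \cite[Theorem~8.3.20]{e}, and the version above is just its transcription into valuation-theoretic language, which is the only form needed below.
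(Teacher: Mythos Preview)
The paper does not supply its own proof of this theorem; it is simply quoted from Engelking \cite{e} without argument, so there is nothing to compare your proposal against. Your argument is a correct and self-contained proof of the valued-field special case, which is the only form actually invoked later in the paper, and you correctly flag that the general uniform-space statement is the cited reference. One minor remark: the ``closure of a $\gamma$-small set is $\gamma$-small'' step is not really the place where the ultrametric is essential --- in any uniform space one uses that the closed entourages form a base of the uniformity; the ultrametric merely lets you keep the same $\gamma$ without first shrinking the entourage.
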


Let $k((\Gamma))$ be formal power series field, where $k$ is Archimedean ordered. Let $\kappa$ be cardinal number which is cofinality of $\Gamma$. Consider a subfield\\
\begin{displaymath}
 k_{\kappa}((\Gamma)) = \{a \in k((\Gamma));\;\;\; \mid supp(a) \mid < \kappa \text{ or } supp(a)\text{ is cofinal in  } \Gamma \text { of order type } \kappa\}.
\end{displaymath}
\begin{claim}
$ k_{\kappa}((\Gamma))$ is complete.
\end{claim}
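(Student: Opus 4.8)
The plan is to verify completeness via the characterization that every Cauchy net in $k_\kappa((\Gamma))$ converges (Theorem 8.3.20). So I would start with an arbitrary Cauchy net $(a^\sigma:\sigma\in\Sigma)$ in $k_\kappa((\Gamma))$ and construct its candidate limit $a\in k((\Gamma))$ coefficient by coefficient: for each $\gamma\in\Gamma$, the Cauchy condition means the coefficients $a^\sigma_\gamma$ stabilize eventually (since $k$ is just a field here with trivial valuation induced on the residue level, ``$v(a^\sigma-a^{\sigma_0})>\gamma$'' forces the $\gamma'$-coefficients to agree for all $\gamma'\le\gamma$ once $\sigma$ is large), so set $a_\gamma$ to be that eventual value. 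The first thing to check is that $a$ so defined is a legitimate element of $k((\Gamma))$, i.e. that $\mathrm{supp}(a)$ is well-ordered, and then that $a^\sigma\to a$ in the valuation uniformity; both are straightforward once one notes that any initial segment $\{\gamma'\le\gamma\}$ of the support of $a$ agrees with that of some fixed $a^{\sigma}$.

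The real content is then showing $a\in k_\kappa((\Gamma))$, not merely in $k((\Gamma))$ — this is where the peculiar definition of $k_\kappa((\Gamma))$ does its work and where I expect the main difficulty. I would argue by cases on the order type of $\mathrm{supp}(a)$. If $\mathrm{supp}(a)$ is bounded above in $\Gamma$, then I claim $|\mathrm{supp}(a)|<\kappa$: each proper initial segment $\{\gamma'\in\mathrm{supp}(a):\gamma'\le\gamma\}$ coincides with an initial segment of $\mathrm{supp}(a^\sigma)$ for suitable $\sigma$ and hence has cardinality $<\kappa$ (as $a^\sigma\in k_\kappa((\Gamma))$ and an initial segment of a support that is either of size $<\kappa$ or cofinal of type $\kappa$ is itself of size $<\kappa$ when it is proper); since $\kappa$ is a regular cardinal (being a cofinality) and $\mathrm{supp}(a)$ has a sup in $\Gamma$ lying above it, the whole support, being a union of $<\kappa$-sized proper initial segments indexed along a set of order type $<\kappa$ — here I use that the cofinality of $\mathrm{supp}(a)$ is then $<\kappa$ — has size $<\kappa$. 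If instead $\mathrm{supp}(a)$ is cofinal in $\Gamma$, then its order type has cofinality $\kappa=\mathrm{cf}(\Gamma)$, and again each proper initial segment has size $<\kappa$, so the order type is exactly $\kappa$; thus $a\in k_\kappa((\Gamma))$ in this case as well.

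I should be careful about one subtlety: I must confirm that an initial segment of $\mathrm{supp}(a)$ genuinely agrees with an initial segment of some $\mathrm{supp}(a^\sigma)$. Given $\gamma$, pick $\sigma_0$ with $v(a^\sigma-a^{\sigma_0})>\gamma$ for all $\sigma>\sigma_0$; then for any $\gamma'\le\gamma$ the coefficient $a_{\gamma'}$ equals $a^{\sigma}_{\gamma'}$ for large $\sigma$, and by the Cauchy condition this common value is already $a^{\sigma_0}_{\gamma'}$, so $\mathrm{supp}(a)\cap(-\infty,\gamma]=\mathrm{supp}(a^{\sigma_0})\cap(-\infty,\gamma]$. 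This gives well-orderedness of $\mathrm{supp}(a)$ for free and feeds the cardinality bookkeeping above. With $a\in k_\kappa((\Gamma))$ established and $a^\sigma\to a$ already observed, the Cauchy net converges inside $k_\kappa((\Gamma))$, so the field is complete. The one place to tread carefully is the regularity/cofinality argument in the bounded case — making sure the union of fewer than $\kappa$ sets each of size less than $\kappa$ stays of size less than $\kappa$, which is exactly regularity of $\kappa=\mathrm{cf}(\Gamma)$ — and matching order types correctly in the cofinal case.
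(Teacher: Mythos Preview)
Your approach is essentially the paper's: build the limit coefficient-by-coefficient from the eventual stabilization forced by the Cauchy condition, then check it lands in $k_\kappa((\Gamma))$. The paper organizes the same construction along a fixed cofinal $\kappa$-sequence $(\gamma_\delta)_{\delta<\kappa}$ in $\Gamma$ and is in fact sketchier than you are---it never explicitly verifies that the limit lies in $k_\kappa((\Gamma))$, whereas your case analysis does. One simplification worth noting: in the bounded case your cofinality detour is unnecessary (and the step ``$\mathrm{supp}(a)$ has a sup in $\Gamma$, hence cofinality $<\kappa$'' is not justified as stated); instead, if $\gamma_0$ is any upper bound for $\mathrm{supp}(a)$, then by your own initial-segment observation $\mathrm{supp}(a)=\mathrm{supp}(a)\cap(-\infty,\gamma_0]=\mathrm{supp}(a^{\sigma_0})\cap(-\infty,\gamma_0]$ already has cardinality $<\kappa$.
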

\begin{proof}
 Let $(a^{\sigma}: \sigma \in \Sigma )$ be a Cauchy net, where 
$a^{\sigma} = \sum_{\gamma \in \Gamma} a^{\sigma}_{\gamma} x^{\gamma}$ and $\Sigma$ is directed set.

Fix cofinal sequence $(\gamma_{\delta} : \delta < \kappa )$ in $\Gamma$. 
Since $(a^{\sigma}: \sigma \in \Sigma )$ is a Cauchy net, we have
$$ \forall_{ \delta <\kappa} \exists_{\sigma_{\delta}} \forall_{\tau > \sigma_{\delta}} \ 
v(a^{\sigma_{\delta}} - a^{\tau} ) \geq \gamma_{\delta +1} $$
The condition  $v(a^{\sigma_{\delta}}- a^{\tau} ) \geq \gamma_{\delta +1}$ means that 
$a^{\sigma_{\delta}}_{q} =  a^{\tau}_{q} $  for every $q< \gamma_{\delta +1}$.
Put $$a^{\infty}_q = 
\begin{cases}
a^{\sigma_{\delta}}_{q} &   q\in [ \gamma_{\delta} , \gamma_{\delta +1} ) \\
a^{\sigma_{0}}_{q} &   q <   \gamma_{1}
\end{cases}.
$$
It is easy to see that $a^{\infty} = \sum_{\gamma \in \Gamma} a^{\infty}_{\gamma} x^{\gamma}$ is a limit of given net.
In fact, it is sufficient to observe that $a^{\infty}_q = a^{\sigma_{\delta} }_q  = a^{\tau}_q $ for every $q< \gamma_{\delta +1}$.
\end{proof}

Now we will show how completeness of an ordered field implies its continuity.

\begin{theorem}\label{con}
Let $(K, P)$ be an ordered field with valuation $v$ determined by   $P$. 
If $K$ is complete, then $K$ is continuous.
\end{theorem}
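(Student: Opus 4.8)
The plan is to prove the contrapositive of the ``gap'' alternative: if $(A,B)$ is a normal cut of $K$ that is not principal, then completeness forces the existence of an element of $K$ filling the cut, which is impossible. The device is to encode such a cut as a centered family of closed ``small'' sets and to apply the definition of completeness directly.

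First I would record the topological preliminary that on $K$ the valuation topology coincides with the order topology: a valuation ball $\{x : v(x-x_0) > \gamma\}$ is order-convex, and for $c>0$ with $v(c) > \gamma$ the order-interval $(x_0 - c,\, x_0 + c)$ lies inside it, so the ball is order-open; conversely every order-interval around a point contains such a ball, since $0<|x-x_0|<c$ forces $v(x-x_0)\ge v(c)$ by convexity of $v$. In particular each closed interval $[a,b]_K = \{x \in K : a \le x \le b\}$ is closed for the valuation uniformity (its complement $\{x<a\}\cup\{x>b\}$ is a union of two valuation-open sets).

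Now assume $(A,B)$ is normal, so that $A,B\neq\emptyset$, and consider $\mathcal F = \{[a,b]_K : a \in A,\ b \in B\}$. This family is centered: a finite intersection $\bigcap_{i=1}^{n}[a_i,b_i]_K$ equals $[\max_i a_i,\ \min_j b_j]_K$, and $\max_i a_i \in A$, $\min_j b_j \in B$ with $\max_i a_i < \min_j b_j$ because in a cut every element of $A$ lies below every element of $B$; hence the intersection is a nonempty closed interval. Moreover $\mathcal F$ contains arbitrarily small sets: given $\gamma \in \Gamma$, choose $c>0$ with $v(c) > \gamma$ (possible, as the value group of a nontrivial valuation has no largest element) and use normality of $(A,B)$ to get $a \in A$, $b \in B$ with $0 < b - a < c$; then $v(b-a) \ge v(c) > \gamma$, and convexity of $v$ gives $v(x-y) \ge v(b-a) > \gamma$ for all $x,y \in [a,b]_K$.

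By the definition of completeness, $\bigcap \mathcal F = \{x_0\}$ for a single $x_0 \in K$, so that $a \le x_0 \le b$ for every $a \in A$ and $b \in B$. Since $A \cup B = K$, either $x_0 \in A$, and then, $A$ having no maximum, some $a' \in A$ satisfies $a' > x_0 \ge a'$, a contradiction; or $x_0 \in B$, and symmetrically $B$ having no minimum yields $b' \in B$ with $b' < x_0 \le b'$, again absurd. Hence every normal cut of $K$ is principal, i.e. $K$ is continuous. I expect the only delicate points to be the verification that $\mathcal F$ is centered and genuinely ``small'' in the precise (strict) sense of the completeness axiom, together with the identification of the valuation and order topologies that makes the intervals closed; the rest is bookkeeping. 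The same argument can be run with nets in the style of the proof of the Claim above: picking along a cofinal sequence in $\Gamma$ elements $a_\delta \in A$, $b_\delta \in B$ with $v(b_\delta - a_\delta)$ large produces a Cauchy net whose limit $x_0$ fills the cut by the identical sign comparisons.
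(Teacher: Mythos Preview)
Your proof is correct and follows essentially the same route as the paper's: encode the normal cut as a centered family of closed order-intervals, invoke completeness to obtain a point in the intersection, and check that this point fills the cut. The only cosmetic difference is that you take the full family $\{[a,b]:a\in A,\ b\in B\}$ (so $a\le x_0\le b$ is immediate), whereas the paper selects one interval $[a_\gamma,b_\gamma]$ per $\gamma\in\Gamma$ and then needs an extra centered-family argument to rule out $c<a$ or $c>b$ for an arbitrary $a\in A$, $b\in B$.
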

\begin{proof}
Consider a normal Dedekind cut $(A,B)$ in $K$.  We will show that it is a principal cut. 
By definition of a normal cut  for every $\gamma \in \Gamma $ there are $a_{\gamma} \in A$ and $b_{\gamma} \in B$ such that $v(a_{\gamma} - b_{\gamma}) >\gamma$.
The intervals $[a_{\gamma}, b_{\gamma} ]$ are closed in uniform topology and $v(x-y)>\gamma $ for any $x,y \in [a_{\gamma}, b_{\gamma} ]$. Moreover, the family 
$\mathcal{F} = \{ [a_{\gamma}, b_{\gamma} ] : \gamma \in \Gamma \}$ is centered.
By a definition of completeness there is a unique $c\in K$ such that 
$$\bigcap \mathcal{F} = \bigcap_{\gamma \in \Gamma} [a_{\gamma}, b_{\gamma} ] = \{ c\} .$$
 To prove that $a\leq c$ for every $a\in A$  assume the opposite: $c<a$ for some $a\in A$. 
Let $\gamma_0=v(c-a)$.
 There is $b\in B$ such that $v(a-b) > \gamma_0$. 
 Since a family $\mathcal{F}\cup \{ [a,b] \}$ is centered  we have also
$$ c\in \bigcap (\mathcal{F} \cup \{ [a,b] \} ) \subset [a,b],$$
which is a contradiction. 
Similarly one can prove that $c\leq b$ for every $b\in B$. 
\end{proof}

\begin{example}\label{ex}
 Let $\textbf{R}$ be a fixed real closure of $\mathbb R(X)$. Then $\Gamma = \mathbb Q$. Let
$\mathbb R(\mathbb Q)\subset \mathbb R((\mathbb Q)) $  be the set of formal power series with finite support.
We have 
$$\mathbb R(\mathbb Q)\subset\textbf{R}\subset  \mathbb R_{\aleph_0}((\mathbb Q)). $$
By Theorem \ref{con}, $\mathbb R_{\aleph_0}((\mathbb Q))$ is continuous.  Note that $\mathbb R(\mathbb Q)$ is dense in 
$\mathbb R_{\aleph_0}((\mathbb Q))$. Thus $\mathbb R_{\aleph_0}((\mathbb Q))$ is the continuous closure of  $\textbf{R}$.
\end{example}

\section{The space $M(\textbf{R}(Y))$ }
To describe the topology of the space $M(\textbf{R}(Y))$, where $\textbf{R}$ is a fixed real closure of $\mathbb R(X)$, we shall modify some methods of \cite{z}. 

Let $R$ be any real closed field and let $P$ be an order of the field $R(Y)$ determining the valuation $v$ with value group $\Gamma$ and the Archimedean residue field $k$.  Let $(A_P, B_P)$ be a cut in $R$ corresponding to $P$ and let $S$  be the lower cut in $\Gamma$ determined by $(A_P, B_P)$ (see section 2).

A formal series $\tilde p =  \sum_{\gamma \in \Gamma} \tilde p_{\gamma} x^{\gamma} \in k((\Gamma))$ is defined in the following way (see \cite {z}):
\begin{enumerate}
 \item If $\gamma \notin S$ then  $\tilde p_{\gamma} =  0$.
\item Suppose that $\gamma \in S$ but $\gamma$ is not a maximal element in $S$. Then there exists $a \in A_P$ and $b \in B_P$ such that $v(b-a)> \gamma$. Then   $\tilde p_{\gamma} =  a_{\gamma} = b_{\gamma}$ ($\tilde p_{\gamma}$ does not depend from the choise of $a$ and $b$ - see \cite{z}, Prop.2.2.3).
\item Suppose that $\gamma $  is a maximal element in $S$. Let $M = \{ a_{\gamma}  \in A_P;\,\, \exists_{b\in B_P} \;v(b-a) > \gamma\}$ and $N = \{ b_{\gamma}  \in B_P;\,\, \exists_{a\in A_P} \;v(b-a) > \gamma\}$.  Then there exists exactly one $r \in \mathbb R$ such that $M \leq r \leq N$ (see \cite{z}, Prop.2.2.3).  Then  $\tilde p_{\gamma} = r$.
\end{enumerate}

\begin{remark}
A series $\tilde p$ does not determine an order uniquely.
For example, if $R$ is non - Archimedean real closed field, then formal series $\tilde p$ constantly equal to 0 correspond to orders given by following cuts:
\begin{itemize}
 \item the principal cuts in 0;
\item gaps between infinitely small and other Archimedean elements;
\item gaps between Archimedean and infinitely large  elements.
\end{itemize}
\end{remark}

To distinguish orders one has to consider the lower cuts $S$ and a sign defined below. Consider three cases:
\\
(+) There exists $a \in A_P$ such that $v(a - \tilde p) \notin S.$ Then take  a symbol $(S, \tilde p, +)$\\
(-) There exists $b \in B_P$ such that $v(b - \tilde p) \notin S.$ Then take  a symbol $(S, \tilde p, -)$\\
(.) For every  $c \in R$, $v(c - \tilde p) \in S.$ Then take  a symbol $(S, \tilde p)$
\begin{remark}
 In the case R = $\mathbb R_{\aleph_0}((\mathbb Q))$ the last case does not hold, because $\tilde p \in R$.
\end{remark}

\begin{remark}\label{zek}(\cite{z} p.33)
 The element $\tilde p$ defined above has properties:
\begin{enumerate}
 \item $\forall_{\gamma \in S} \exists_{a\in R}\;\; v(a - \tilde p )\geq \gamma$;
\item $\forall_{\gamma \notin S} \;\;\tilde p_{\gamma} = 0$.
\end{enumerate}
\end{remark}
\begin{theorem}[\cite{z}, Theorem 2.2.6]\label{os}
 There is one - to - one correspondence between orders of the field $R(Y)$ and the symbols $(S, \tilde p, +)$, $(S, \tilde p, -)$ and $(S, \tilde p)$  for  $\tilde p \in \mathbb R((\Gamma))$ satisfying conditions (1), (2) of Remark \ref{zek}.
\end{theorem}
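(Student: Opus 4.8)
The plan is to realize the asserted correspondence as an explicit bijection. Write $\Phi$ for the map that sends an order $P$ of $R(Y)$ --- equivalently, via the bijection with Dedekind cuts recalled in Section~2, its cut $(A_P,B_P)$ --- to the triple $(S,\tilde p,\varepsilon)$, where $S$ is the lower cut of $(A_P,B_P)$ in $\Gamma$, $\tilde p\in\mathbb R((\Gamma))$ is the power series built above, and $\varepsilon\in\{+,-,\,\cdot\,\}$ is the sign ($\varepsilon=\,\cdot\,$ meaning the bare symbol $(S,\tilde p)$). First I would settle well-definedness: that $S$ is a lower cut, and that $\tilde p$ is independent of the auxiliary choices and satisfies (1),(2) of Remark~\ref{zek}, are the facts quoted from \cite{z}, so the only new point is that \emph{exactly one} of the three signs applies. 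This is short: if $(+)$ and $(-)$ both held we would get $a\in A_P$, $b\in B_P$ with $v(a-\tilde p),v(b-\tilde p)\notin S$, and since $\Gamma\setminus S$ is upward closed, $v(b-a)\ge\min\{v(a-\tilde p),v(b-\tilde p)\}\notin S$, contradicting $v(b-a)\in S$; and if neither $(+)$ nor $(-)$ holds we are by definition in case $(\,\cdot\,)$.

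The heart of the proof is injectivity, which I would obtain by \emph{reconstructing the cut from the symbol}. The claim is: for $c\in R$, if $v(c-\tilde p)\notin S$ then $c\in A_P$ exactly when $\varepsilon=+$ and $c\in B_P$ exactly when $\varepsilon=-$; and if $v(c-\tilde p)=\gamma\in S$ --- in which case $c\neq\tilde p$, so $c$ and $\tilde p$ are strictly comparable in $\mathbb R((\Gamma))$ --- then $c\in A_P\iff c<\tilde p$. The first assertion follows from the same observation as above: all $c$ with $v(c-\tilde p)\notin S$ lie on one side of the cut (otherwise two of them straddling the cut would again force $v(b-a)\notin S$), and $\varepsilon$ records which. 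For the second, assuming $c<\tilde p$ but $c\in B_P$, I would produce $a\in A_P$ with $a>c$, a contradiction: when $\gamma$ is not the largest element of $S$, clause~(2) of the construction of $\tilde p$ furnishes $a\in A_P$ agreeing with $\tilde p$ to order $>\gamma=v(c-\tilde p)$, whence $v(a-c)=\gamma$ with the same leading coefficient as $\tilde p-c$, so $a>c$ because $c<\tilde p$; when $\gamma=\max S$ one argues through the real number $r=\tilde p_\gamma$ of clause~(3), using that $c$ agrees with $\tilde p$ below $\gamma$ while $c_\gamma<r$, in order to find $a\in A_P$ agreeing with $\tilde p$ below $\gamma$ and with $a_\gamma>c_\gamma$, again giving $a>c$. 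The case $c>\tilde p$ is symmetric. Hence $A_P$ --- and therefore the cut and the order $P$ --- is completely determined by $(S,\tilde p,\varepsilon)$, which gives injectivity.

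For surjectivity I would run the recipe backwards: given a symbol of the listed form, \emph{define} $A\subseteq R$ by the formula extracted above (for $\varepsilon=+$: every $c$ with $v(c-\tilde p)\notin S$, together with every $c$ with $v(c-\tilde p)\in S$ and $c<\tilde p$; and analogously for $\varepsilon=-$ and $\varepsilon=\,\cdot\,$), set $B=R\setminus A$, and let $P$ be the order attached to $(A,B)$. That $A$ is downward closed is immediate from $S$ being a lower cut (moving away from $\tilde p$ on one side only decreases the valuation of the difference), so $(A,B)$ is a genuine Dedekind cut; and $\Phi(P)=(S,\tilde p,\varepsilon)$ follows because condition~(1) of Remark~\ref{zek} furnishes approximants of $\tilde p$ lying in both $A$ and $B$, and these are exactly what pins the lower cut of $(A,B)$ down to $S$ and its associated power series down to $\tilde p$, after which the sign is forced. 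One point to be careful about here is that not every formal triple $(S,\tilde p,\varepsilon)$ with $\tilde p$ satisfying (1),(2) is realizable --- there are compatibility constraints among $S$, $\tilde p$ and the sign (for instance $\varepsilon=\,\cdot\,$ forces $\tilde p\notin R$, as in the remark following the statement, and more generally $v(c-\tilde p)\in S$ for every $c\in R$) --- so the codomain of $\Phi$ is to be read as the set of \emph{admissible} symbols, and surjectivity is the statement that every admissible symbol is hit.

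I expect the main obstacle to be the injectivity step, and inside it the subcase $\gamma=\max S$: there one cannot finish by a single comparison of valuations but must pass to the Archimedean residue field and approximate the real coefficient $r=\tilde p_\gamma$, so the bookkeeping with the defining clauses of $\tilde p$ is the one place where genuine care is needed; everything else is a fairly mechanical unwinding of the definitions of $S$, $\tilde p$ and the sign.
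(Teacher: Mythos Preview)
The paper does not prove this theorem at all: it is quoted verbatim as \cite[Theorem 2.2.6]{z} and stated without proof, so there is no ``paper's own proof'' to compare against. Your proposal therefore cannot be checked against the present paper; a genuine comparison would require consulting Zekavat's thesis.

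That said, your outline is a reasonable direct attack on the bijection and tracks the intended meaning of the symbols. Two comments. First, your caveat that ``not every formal triple $(S,\tilde p,\varepsilon)$ \ldots\ is realizable'' is in tension with the theorem as stated: the statement asserts a bijection with \emph{all} symbols whose $\tilde p$ satisfies (1),(2) of Remark~\ref{zek}, so either the admissibility constraints you have in mind are already encoded in conditions (1),(2) together with the three case definitions $(+)$, $(-)$, $(\cdot)$, or you are implicitly narrowing the codomain beyond what the theorem claims. You should make explicit which reading you adopt and verify that it matches the cited result. Second, in your surjectivity step the verification that the cut $(A,B)$ you build really has lower cut equal to the given $S$ (and not some strictly larger lower set) needs both directions spelled out; condition~(1) gives approximants witnessing that every $\gamma\in S$ lies in the lower cut of $(A,B)$, but you also need that no $\gamma\notin S$ does, and this is where condition~(2) together with your placement rule for elements $c$ with $v(c-\tilde p)\notin S$ must be invoked carefully.
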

Now we shall restrict to the case when $R$ is a fixed real closure of $\mathbb R(X)$. 
Let $((X))$ be a set of series of the form $p = \sum_{} p_\gamma x^{\gamma}$, where $\gamma \in \mathbb Q \cup \{\infty\}$,\;\; $p_{\gamma} \in \mathbb  R \cup \{\pm \infty\}$ having following properties:
\begin{enumerate}
 \item $p_{\infty} \in \{\pm \infty\};$
\item the support $supp (p) = \{\gamma:\,\, p_{\gamma} \notin \{0, \pm \infty\}\}$ is finite or cofinal in $\mathbb Q$ of order type $\aleph_0$;
\item if $p_{\gamma} = \varepsilon\infty $, then  $p_{\delta} = \varepsilon\infty$, for every $\delta> \gamma$ and $\varepsilon \in \{+,-\}$.
\end{enumerate}
Note that the map $(S,\tilde p , \varepsilon) \longmapsto p \in ((X))$, where
 $$p_{\gamma} = \left\lbrace \begin{array}{l}
                 \tilde p_{\gamma}, \text{ for } \gamma \in S\\
\varepsilon\infty \text{ for } \gamma \notin S
                           \end{array}\right.$$
is a bijection. This fact and Theorem \ref{os} leads to following corollary.
\begin{corollary}
 There is one - to - one correspondence between the orders of the field $\mathbb R_{\aleph_0}((\mathbb Q))(Y)$ and $((X))$.
\end{corollary}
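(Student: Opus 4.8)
The plan is to read the statement off of Theorem \ref{os} together with the bijection $(S,\tilde p,\varepsilon)\longmapsto p$ displayed just above it. Concretely, Theorem \ref{os} identifies the orders of $\mathbb R_{\aleph_0}((\mathbb Q))(Y)$ (for which $\Gamma=\mathbb Q$ and the residue field is $\mathbb R$) with the symbols $(S,\tilde p,+)$, $(S,\tilde p,-)$ and $(S,\tilde p)$; I would then show (i) that the signless symbols $(S,\tilde p)$ do not arise, so only $(S,\tilde p,\varepsilon)$ with $\varepsilon\in\{+,-\}$ occur, and (ii) that $(S,\tilde p,\varepsilon)\mapsto p$ is a bijection from those onto $((X))$. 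Composing the two bijections gives the corollary.

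For (i) I would use the Remark preceding Remark \ref{zek}: the signless symbol corresponds to case (.), which demands $v(c-\tilde p)\in S$ for \emph{every} $c\in R$, and this fails once $\tilde p\in R$, since then $c=\tilde p$ gives $v(c-\tilde p)=\infty\notin S\subseteq\mathbb Q$. To see that $\tilde p\in R$ for the symbols that do occur: in case (+) there is $a\in A_P$ with $v(a-\tilde p)\notin S$, so $S\neq\mathbb Q$ and $a$ and $\tilde p$ have the same coefficient at every exponent in $S$; since $supp(\tilde p)\subseteq S$ by Remark \ref{zek}(2), since a proper lower cut of $\mathbb Q$ is bounded above, and since $supp(a)$ is finite or cofinal in $\mathbb Q$ of order type $\aleph_0$, the set $supp(\tilde p)$ must be finite, whence $\tilde p\in\mathbb R(\mathbb Q)\subseteq R$; case (-) is symmetric.

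For (ii), well-definedness means verifying the three conditions defining $((X))$ for the image $p$: (1) holds because $\infty\notin\mathbb Q\supseteq S$; (2) holds because $supp(p)=supp(\tilde p)$ by Remark \ref{zek}(2) and $\tilde p\in R=\mathbb R_{\aleph_0}((\mathbb Q))$; (3) holds because $p_\gamma=\varepsilon\infty$ exactly for $\gamma\notin S$ and $S$ is a lower cut. Injectivity is clear: $p$ determines $S=\{\gamma\in\mathbb Q:p_\gamma\in\mathbb R\}$, the sign $\varepsilon$ (as the sign of $p_\infty$), and $\tilde p$ (by $\tilde p_\gamma=p_\gamma$ on $S$, $\tilde p_\gamma=0$ off $S$). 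For surjectivity, given $p\in((X))$ I would define $S$, $\varepsilon$, $\tilde p$ by these same formulas; condition (3) of $((X))$ makes $S$ a lower cut and forces $p_\gamma=\varepsilon\infty$ for all $\gamma\notin S$, while condition (2) of $((X))$ says precisely that $supp(\tilde p)=supp(p)$ is finite or cofinal in $\mathbb Q$ of order type $\aleph_0$, i.e. $\tilde p\in\mathbb R_{\aleph_0}((\mathbb Q))$; hence conditions (1), (2) of Remark \ref{zek} hold (the second by construction, the first by taking $a=\tilde p$), so $(S,\tilde p,\varepsilon)$ is a legitimate symbol and it is sent to $p$.

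The step requiring the most care is matching the two ``smallness'' conditions — Remark \ref{zek}(1) on the series $\tilde p$ attached to an order versus condition (2) in the definition of $((X))$ — i.e. checking that the series $\tilde p$ coming from orders of $\mathbb R_{\aleph_0}((\mathbb Q))(Y)$ are exactly those obtained from elements of $((X))$ by deleting the $\pm\infty$ coefficients. This is the precise point at which the defining feature of $\mathbb R_{\aleph_0}((\mathbb Q))$ — every support being finite or cofinal in $\mathbb Q$ of order type $\aleph_0$ — enters. Everything else is routine unwinding of the definitions.
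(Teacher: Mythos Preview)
Your overall plan is exactly the paper's: invoke Theorem~\ref{os}, use the earlier Remark that the signless case~(.) does not occur when $R=\mathbb R_{\aleph_0}((\mathbb Q))$, and compose with the displayed bijection $(S,\tilde p,\varepsilon)\mapsto p$. Your treatment of part~(ii) --- checking that the image lands in $((X))$, and that the map is injective and surjective --- is correct and fills in details that the paper leaves entirely implicit.

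There is, however, a logical slip in your argument for~(i). You correctly observe that case~(.) is impossible once $\tilde p\in R$, and then you set out to verify $\tilde p\in R$ --- but you do so only under the hypothesis that the order is already in case~(+) or~(-). That is circular: to exclude case~(.), one must show that for \emph{every} cut of $\mathbb R_{\aleph_0}((\mathbb Q))$ the associated series $\tilde p$ lies in $\mathbb R_{\aleph_0}((\mathbb Q))$, without presupposing which of the three cases holds. The paper does not fill this gap either; it simply asserts the Remark with the clause ``because $\tilde p\in R$'', so your write-up is no less complete than the paper's own. But the point deserves care rather than a case split over~(+) and~(-): if $S\subsetneq\mathbb Q$ is a lower cut with no maximum, a series $\tilde p$ satisfying conditions~(1) and~(2) of Remark~\ref{zek} can have support of order type $\omega$ cofinal in $S$ but not in $\mathbb Q$ (e.g.\ $\tilde p=\sum_{n\ge1}x^{-1/n}$ with $S=\mathbb Q\cap(-\infty,0)$), and such a $\tilde p$ is not in $\mathbb R_{\aleph_0}((\mathbb Q))$ under the paper's stated definition. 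So either a further argument is required here, or the intended definition of $\mathbb R_{\aleph_0}((\mathbb Q))$ is broader than its literal statement.
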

We consider the topology of lexicographic order on $((X))$.
\begin{proposition}
 $((X))$ is homeomorphic to $\mathbb R_{\aleph_0}((\mathbb Q))(Y)$.
\end{proposition}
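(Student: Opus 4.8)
By the preceding Corollary there is a bijection $\Phi\colon((X))\to\mathcal X(R(Y))$, where $R:=\mathbb R_{\aleph_0}((\mathbb Q))$ and $\mathcal X(R(Y))$ is the space of orders of $R(Y)$ to which the statement refers; explicitly $\Phi$ is the composite of the map $p\longmapsto(S,\tilde p,\varepsilon)$ with the correspondence of Theorem \ref{os}. The plan is to show $\Phi$ is a homeomorphism in two essentially independent steps. \emph{Step 1:} $\Phi$ is an isomorphism of linearly ordered sets, where $((X))$ carries the lexicographic order $<_{\mathrm{lex}}$ and $\mathcal X(R(Y))$ the order $\prec$ given by $P\prec P'\iff A_P\subsetneq A_{P'}$. \emph{Step 2:} on $\mathcal X(R(Y))$ the Harrison topology coincides with the order topology of $\prec$. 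Since an order isomorphism of linearly ordered sets is automatically a homeomorphism for the associated order topologies, and the topology of $((X))$ is by definition the order topology of $<_{\mathrm{lex}}$, Steps 1 and 2 together prove the Proposition.

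For Step 1, recall that an order $P$ of $R(Y)$ is the same thing as a Dedekind cut $(A_P,B_P)$ of $R$ — the cut in which $Y$ lies — and that via $\Phi$ the element $p\in((X))$ encodes this cut: $\tilde p$ records the common truncations of elements of $R$ bracketing the cut (and, at the maximal element of $S$, the separating real), while the tail $\varepsilon\infty$ records on which side of the ``generalized element'' $\tilde p$ the parameter $Y$ falls. It therefore suffices to check that $p<_{\mathrm{lex}}p'$ forces $A_{P_p}\subsetneq A_{P_{p'}}$; one does this by exhibiting, at the least exponent $\gamma_0$ where $p$ and $p'$ differ, an element $a\in R$ with $a\notin A_{P_p}$ and $a\in A_{P_{p'}}$. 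According to whether $\gamma_0$ is a non-maximal element of the relevant lower cut $S$, its maximal element, or lies in the $\varepsilon\infty$-tail — and with the convention $-\infty<r<+\infty$ — the construction of $\tilde p$ supplies such an $a$ directly (a sufficiently long truncation of $\tilde p$ or of $\tilde p'$, perturbed in the direction fixed by $p_{\gamma_0}<p'_{\gamma_0}$). Thus $\Phi$ is a strictly increasing bijection between linearly ordered sets, hence an order isomorphism.

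For Step 2, since $R$ is real closed every $f\in R(Y)$ is, up to a product of sums of squares (which are everywhere positive), a ratio of products of linear polynomials $Y-a$ with $a\in R$; hence the sign of $f$ on a suitably small neighbourhood of a cut $(A_P,B_P)$ equals a fixed sign times a product of the signs of finitely many factors $Y-a$, and near the cut the sign of $Y-a$ is $+$ or $-$ according as $a\in A_P$ or $a\in B_P$. Consequently $H_{R(Y)}(f)=\{P:f\in P\}$ is a union of finitely many open $\prec$-intervals — the maximal intervals on which the positions of the relevant $a$'s relative to the cut, and hence the sign of $f$, are constant — so $H_{R(Y)}(f)$ is open in the order topology of $\prec$; thus the Harrison topology is contained in the order topology. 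Conversely $H_{R(Y)}(Y-a)=\{P:a\in A_P\}$ and $H_{R(Y)}(a-Y)=\{P:a\in B_P\}$ for $a\in R$, and these sets generate the order topology of $\prec$ (every $\prec$-ray, even one determined by a gap-type order, is a union of such sets). Hence the two topologies agree, completing Step 2 and the proof.

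The main obstacle is the bookkeeping in Step 1. The definition of $\tilde p$ splits into three sub-cases (exponent outside $S$, inside $S$ but not maximal, maximal in $S$), the sign symbol $\varepsilon$ must be kept consistently aligned with the lexicographic convention for $\pm\infty$, and care is needed with the degenerate situations noted in the remark after the definition of $\tilde p$, in which distinct cuts share the same $\tilde p$; in particular a careless treatment of the ``maximal element of $S$'' case — where a genuine real number, not a truncation, enters $\tilde p$ — would break the monotonicity argument. By contrast Step 2, and the fact that $<_{\mathrm{lex}}$ is a total order on $((X))$, are routine.
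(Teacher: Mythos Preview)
Your approach is essentially the paper's, reorganized: the paper shows directly that each Harrison set $H(fg)$ is a finite union of lexicographic intervals $(a^-,b^+)$ in $((X))$ and, conversely, that every interval $(p,q)$ in $((X))$ is a union of Harrison sets of quadratic polynomials with roots in $R$; your Step~2 is exactly this computation carried out on the $\mathcal X(R(Y))$ side, and your Step~1 makes explicit the order isomorphism that the paper uses without comment. The factoring through ``$\Phi$ is an order isomorphism'' plus ``Harrison topology $=$ order topology'' is a tidier packaging of the same argument.

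One caveat on Step~1: the phrase ``at the least exponent $\gamma_0$ where $p$ and $p'$ differ'' is not always meaningful. If $S=S'$ is a lower cut in $\mathbb Q$ with no maximum whose complement has no minimum (e.g.\ $S=\{q\in\mathbb Q:q<\sqrt 2\}$) and $\tilde p=\tilde p'$, then the two elements of $((X))$ coming from $(S,\tilde p,+)$ and $(S,\tilde p,-)$ differ exactly on $(\mathbb Q\cup\{\infty\})\setminus S$, which has no least element. The fix is easy --- pick \emph{any} $\gamma_0\notin S$ and use an $a\in R$ with $v(a-\tilde p)\notin S$ as the separating element --- but you should not rely on a first index of disagreement when you write the bookkeeping out.
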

\begin{proof}
Take the Harisson set $H(\frac{f}{g}) = H(fg) \subset \mathcal X(\mathbb R_{\aleph_0}((\mathbb Q))(Y))$. The polynomial $fg$ takes positive values of finite many intervals. An order $P$ belongs to $H(fg)$ if and only if corresponding cut $(A_P, B_P)$ has neighbourhood on which $fg$ takes positive values. So if $fg$ takes positive values on interval $(a,b)$ avery cut $(A_P, B_P)$ of $(a,b)$  gives an order $P$ which belongs to $H(fg)$. Additionally we should check what happens at the ends $a = \sum a_{\gamma}x^{\gamma}$ and $b =\sum a_{\gamma}x^{\gamma}$ of the interval $(a,b)$.
Let $a^-\prec a^+\prec b^-\prec b^+$ be a series from $((X))$ corresponding to the principal cuts in $a$ and $b$. Note that $a^+$ and $b^-$ belongs to $H(fg)$. Thus $(a^-, b^+) \subset H(fg)$. In other words $H(fg)$ is a finite sum of intervals  $(a^-, b^+) \subset ((X))$. such that $fg $ is positive on $(a,b)\subset \mathbb R_{\aleph_0}((\mathbb Q))$.
From the other side, note that an interval $(p,q)\subset ((X))$, can be replaced by the sum of Harrison sets $H(f)$, where $f$ is running through the all quadratic polynomials with roots $a,b \in \mathbb R_{\aleph_0}((\mathbb Q))$ such that $p \preceq a^+ \prec b^-\preceq q$ and positive on $(a,b)$.
\end{proof}
\begin{corollary}\label{m}
 The space of $\mathbb R$ - places of the field $\mathbb R_{\aleph_0}((\mathbb Q))(Y)$ is homeomorphic to the space
\begin{displaymath}
((M)) = \left\lbrace  \sum_{} p_\gamma x^{\gamma};\;\;\;\gamma \in \mathbb Q,\;\;\; p_{\gamma} \in \mathbb  R \cup \{ \infty\}\right\rbrace  
\end{displaymath}
of series satisfying following properties:
\begin{enumerate}
 \item $supp(p)$ is finite or cofinal in $\mathbb Q$ of order type $\aleph_0$;
\item if $p_{\gamma} = \infty$, then $p_{\delta} = \infty$ for every $\delta >\gamma$.
\end{enumerate}
with the quotient topology from $((X))$.
\end{corollary}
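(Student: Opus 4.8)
The plan is to present $((M))$ as the quotient of $((X))$ by the fibres of the canonical map $\lambda$, and to identify those fibres through Theorem~\ref{rx} (applied, as everywhere in this section, with $Y$ in place of $X$). Introduce the ``sign-forgetting'' map $\pi\colon((X))\to((M))$ by $\pi(p)_\gamma=p_\gamma$ when $p_\gamma\in\mathbb R$ and $\pi(p)_\gamma=\infty$ when $p_\gamma\in\{\pm\infty\}$ (for $\gamma\in\mathbb Q$), discarding the coordinate at $\infty$; comparing conditions (1)--(3) defining $((X))$ with (1)--(2) defining $((M))$ shows $\pi$ is well defined, and it is plainly onto (given $q\in((M))$, fill each $\infty$-slot of $q$, and the slot at $\infty$, with $+\infty$). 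Through the bijection $\mathcal X(\mathbb R_{\aleph_0}((\mathbb Q))(Y))\leftrightarrow((X))$ of the Corollary above --- a homeomorphism by the Proposition above --- the map $\lambda$ becomes a quotient map out of $((X))$. Hence it suffices to prove that $\pi$ and $\lambda$ induce the same partition of $((X))$: then $M(\mathbb R_{\aleph_0}((\mathbb Q))(Y))$ and $((M))$ are $((X))$ modulo one and the same equivalence, each carrying the quotient topology, which is exactly the assertion.

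To translate the condition, recall that if $p$ corresponds to the symbol $(S,\tilde p,\varepsilon)$ of Theorem~\ref{os} through the bijection $(S,\tilde p,\varepsilon)\longmapsto p$ displayed above, then $S=\{\gamma\in\mathbb Q:p_\gamma\in\mathbb R\}$ and $\tilde p$ is obtained from $p$ by zeroing out its $\pm\infty$-tail; consequently $\pi(p)=\pi(p')$ if and only if $S=S'$ and $\tilde p=\tilde p'$. On the other hand, by Theorem~\ref{rx}, $\lambda$ glues the two corresponding orders if and only if $S=S'$ and $S\cap U=\emptyset$, where $U$ is the upper cut of Lemma~\ref{u}. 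If $S\ne S'$ neither map glues --- for $\lambda$ this is Theorem~\ref{rx}(1), for $\pi$ it is visible from the $\pm\infty$-patterns --- so the corollary comes down to: (I) if $S_1=S_2=S$ and $\tilde p_1=\tilde p_2$, then $S\cap U=\emptyset$; and (II) if $S_1=S_2=S$ and $\tilde p_1\ne\tilde p_2$, then $S\cap U\ne\emptyset$.

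For (I) I would first record that, for a common value $\tilde p\in R$, the orders $(S,\tilde p,-)\prec(S,\tilde p,+)$ correspond to the cuts with $A_{-}=\{c<\tilde p:v(c-\tilde p)\in S\}$ and $A_{+}=\{c\le\tilde p\}\cup\{c>\tilde p:v(c-\tilde p)\notin S\}$ (downward closedness following from the fact that $0<t'<t$ forces $v(t')\ge v(t)$ under $R\hookrightarrow\mathbb R((\mathbb Q))$), whence $B_{-}\cap A_{+}=\{c\in R:v(c-\tilde p)\notin S\}$; for $a<a'$ in this set $v(a'-a)\ge\min\{v(a-\tilde p),v(a'-\tilde p)\}\notin S$ (the minimum of two exponents outside the downward-closed $S$ lies outside $S$, and $\Gamma\setminus S$ is upward closed), so $U\subseteq\Gamma\setminus S$ and $S\cap U=\emptyset$. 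For (II), put $\delta=v(\tilde p_1-\tilde p_2)$; since each $\tilde p_i$ vanishes off $S$ (Remark~\ref{zek}(2)) we get $\delta\in S$, and the definition of the $\tilde p_i$ together with Remark~\ref{zek}(1) yields $b\in B_1$ and $a\in A_2$ approximating $\tilde p_1$ and $\tilde p_2$ more closely than $x^{\delta}$; these lie in $B_1\cap A_2$ and satisfy $v(a-b)=\delta\in S\cap U$. The step I expect to be the main obstacle is (II) when $\delta$ is the largest element of $S$: the available approximations are then only of precision $\delta$, and one has to invoke clause (3) in the definition of $\tilde p$ --- the real number $r$ with $M\le r\le N$ --- to guarantee that $P_1$ and $P_2$ are separated at exactly the exponent $\delta$ and on the correct side; this is precisely where one uses that $R=\mathbb R_{\aleph_0}((\mathbb Q))$ is continuous closed, i.e.\ that $\tilde p\in R$ and the ``dot'' symbols do not occur.
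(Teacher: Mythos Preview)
Your approach is the same as the paper's: both arguments identify $((M))$ as the quotient of $((X))$ by the fibres of $\lambda$, and both read off those fibres from Theorem~\ref{rx}. The paper's proof is literally one sentence --- by Theorem~\ref{rx}, $p,q\in((X))$ determine the same $\mathbb R$-place iff $p_\gamma=q_\gamma$ whenever one of them is a real number --- which is exactly your criterion $\pi(p)=\pi(q)$; the paper does not spell out the translation into the $S,U$ language at all. Your steps (I) and (II) are a correct unpacking of what the paper leaves implicit, and the case you flag (when $\delta=\max S$) is indeed the only delicate point; it is resolved, as you say, by the fact that $R=\mathbb R_{\aleph_0}((\mathbb Q))$ contains each $\tilde p$, so clause~(3) pins down the $\delta$-coefficients $r_1\ne r_2$ and one can separate the two cuts at level $\delta$ by any element with $\delta$-coefficient strictly between $r_1$ and $r_2$.
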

\begin{proof}
 By the construction of elements of $((X))$ and Theorem \ref{rx} we have that $ p = \sum_{} p_\gamma x^{\gamma}$,
$q = \sum_{} q_\gamma x^{\gamma}\,\, \in ((X))$ determine the same $\mathbb R$ - place if and only if 
 $p_{\gamma} = q_{\gamma}$ when one of them is a real number.
\end{proof}
\begin{lemma}
 The cellularity of  $((M))$ is not smaller then continuum $\mathfrak c $.
\end{lemma}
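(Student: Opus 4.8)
The plan is to exhibit a family $\{W_r:r\in\mathbb R\}$ of pairwise disjoint nonempty open subsets of $((M))$; this forces the cellularity to be at least $\lvert\mathbb R\rvert=\mathfrak c$. By Corollary \ref{m} it is equivalent to work inside $M(K)$ for $K=L(Y)$, where $L=\mathbb R_{\aleph_0}((\mathbb Q))$ and $\Gamma=\mathbb Q$; recall that $\mathcal X(K)$ is the linearly ordered set of Dedekind cuts of $L$ (Section 2), that its Harrison topology is the interval topology of $\prec$, and that $\lambda=\lambda_K\colon\mathcal X(K)\to M(K)$ is the canonical quotient map.

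The key device is the following. Since $\mathcal X(K)$ is compact (Boolean) and $M(K)$ is Hausdorff, $\lambda$ is a closed map, so for every open $V\subseteq\mathcal X(K)$ the set
\[
V^{\#}:=\{\xi\in M(K):\lambda^{-1}(\xi)\subseteq V\}
\]
is open in $M(K)$ (its complement is $\lambda(\mathcal X(K)\setminus V)$, the image of a compact set). Moreover $V\cap V'=\emptyset$ implies $V^{\#}\cap(V')^{\#}=\emptyset$, and $V^{\#}\neq\emptyset$ as soon as $V$ contains a whole $\lambda$-fibre.

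Fix $t\in L$ with $t>0$ and $v(t)>0$ (for instance $t=x$), and for $r\in\mathbb R$ put $I_r:=(P^{+}_{r-t},\,P^{-}_{r+t})$, an open interval of $\prec$, hence an open subset of $\mathcal X(K)$. I would check: (a) for $r<r'$ the element $(r'-t)-(r+t)=(r'-r)-2t$ of $L$ is positive (its leading term lies at exponent $0$ with positive coefficient $r'-r$, since $v(r'-r)=0<v(2t)$), hence $r+t<r'-t$ in $L$, so $P^{-}_{r+t}\prec P^{+}_{r'-t}$ and the intervals $I_r$ are pairwise disjoint; (b) $I_r$ contains the whole $\lambda$-fibre of $\mu_r:=\lambda(P^{-}_r)$. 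Indeed $P^{-}_r,P^{+}_r\in I_r$ since $r-t<r<r+t$, so it suffices to see that no further order maps to $\mu_r$. If $\lambda(Q)=\mu_r$ then by Theorem \ref{rx}(1) the lower cut of $Q$ in $\Gamma$ is that of $P^{-}_r$, i.e. all of $\Gamma$; hence the cut of $Q$ is normal, so --- $L$ being continuous closed (Example \ref{ex}) --- principal, say $Q=P^{\varepsilon}_e$. If $e\neq r$, then comparing the cuts of $Q$ and $P^{-}_r$ by inclusion, the set $U$ of Lemma \ref{u} attached to this pair is nonempty (it is the set of valuations of differences of two distinct points of a nonempty open interval of $L$), and, the common lower cut being all of $\Gamma$, we get $S\cap U\neq\emptyset$; by Theorem \ref{rx}(2) this yields $\lambda(Q)\neq\mu_r$, a contradiction. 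Thus $\lambda^{-1}(\mu_r)=\{P^{-}_r,P^{+}_r\}\subseteq I_r$.

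Setting $W_r:=I_r^{\#}$, the family $\{W_r:r\in\mathbb R\}$ consists of pairwise disjoint open subsets of $((M))$, and each is nonempty since $\mu_r\in W_r$; hence the cellularity of $((M))$ is at least $\mathfrak c$. The one step needing genuine care is (b), ruling out extra orders glued to $\mu_r$: this is precisely where the continuity of $L=\mathbb R_{\aleph_0}((\mathbb Q))$ and Theorem \ref{rx} enter. The closedness of $\lambda$ (making $V^{\#}$ open) and the computation in (a) are routine.
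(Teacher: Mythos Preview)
Your proof is correct and shares the paper's core idea: produce a family of pairwise disjoint nonempty open sets indexed by $\mathbb R$, each arising from an open interval in the linearly ordered space $\mathcal X(K)\cong((X))$ centred at the real number serving as the $x^0$-coefficient. The execution differs. The paper works directly in the series model: it specifies $U_t\subset((M))$ explicitly (series with $c_0=t$, $c_\gamma=0$ for $\gamma<1$, $\gamma\neq 0$, and $c_1\in(-1,1)$), identifies its preimage in $((X))$ as an interval whose endpoints are abnormal-gap orders (with $\pm\infty$ entries for $\gamma>1$), and reads off openness and disjointness from the quotient description in Corollary~\ref{m} together with Theorem~\ref{rx}. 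You instead take principal-cut endpoints $P^{\pm}_{r\mp t}$, invoke the general fact that for a closed surjection the set $V^{\#}=\{\xi:\lambda^{-1}(\xi)\subseteq V\}$ is open, and then use Theorem~\ref{rx} plus the continuity of $L$ to pin down $\lambda^{-1}(\mu_r)=\{P^{-}_r,P^{+}_r\}$ exactly. The trade-off: the paper avoids the fibre computation by choosing a manifestly saturated interval and describing its image explicitly, while your route avoids the explicit series bookkeeping at the cost of a short extra argument identifying the fibre. Both reach the goal with comparable effort.
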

\begin{proof}
 We will define a family of parwise disjoint open sets of cardinality $\mathfrak c $. 
 
Let  $t \in \mathbb R$ and let $U_t$ be a set which contains all series  $c^t$ with properties:
\begin{itemize}
 \item[$ \cdot$] $ c^t_{\gamma} = 0 $ for $\gamma <1, \gamma \neq 0$
\item[$ \cdot$] $c^t_{0} = t $
\item[$ \cdot$] $c^t_{1} \in (-1,1)$
\end{itemize}
Observe that:
\begin{enumerate}
 \item $U_t$ is nonempty.
\item $U_t$ is open, because its inverse image in $((X))$ is an interval $(a^t, b^t)$, where
 \begin{tabular}{lcccl}
  $ a^t_{\gamma} = 
\left\lbrace  \begin{array}{l}
c^t_{\gamma} \text{ for } \gamma <1   \\
-1       \text{ for } \gamma = 1\\ 
+\infty   \text{ for } \gamma >1
\end{array} \right.$ & &&&$ b^t_{\gamma} = 
\left\lbrace  \begin{array}{l}
c^t_{\gamma} \text{ for } \gamma <1   \\
1       \text{ for } \gamma = 1\\
-\infty   \text{ for } \gamma >1 \end{array} \right..$ 
 \end{tabular}
\item  For $t<s$, $(a^t, b^t)<(a^s, b^s)$ and by using Theorem \ref{rx} we have that $U_t \cap U_s = \emptyset$.
\end{enumerate} \end{proof}
\begin{corollary}\label{met}
 $((M))$ is not metrizable.
\end{corollary}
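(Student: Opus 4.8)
The plan is to combine the previous Lemma with the compactness of spaces of $\mathbb{R}$-places. First I would recall that, by Corollary \ref{m}, the space $((M))$ is homeomorphic to $M(\mathbb{R}_{\aleph_0}((\mathbb{Q}))(Y))$, and that every space of $\mathbb{R}$-places is compact and Hausdorff (as noted in the introduction: $M(K)$ is a continuous Hausdorff image of the Boolean space $\mathcal{X}(K)$, hence compact). Thus $((M))$ is a compact Hausdorff space, and all I need to show is that it fails to be second countable.

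Next I would invoke the standard fact that a compact Hausdorff space is metrizable if and only if it is second countable; in particular a compact metrizable space has countable cellularity, since a countable base admits only countably many pairwise disjoint nonempty open members. Equivalently, a compact metrizable space is separable, and any separable space has countable cellularity.

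Finally I would appeal to the preceding Lemma, which produces a family $\{U_t : t \in \mathbb{R}\}$ of pairwise disjoint nonempty open subsets of $((M))$; hence the cellularity of $((M))$ is at least $\mathfrak{c} > \aleph_0$. Together with the previous paragraph this contradicts metrizability, so $((M))$ is not metrizable. Combined with Corollary \ref{m}, Theorem \ref{rc} and Example \ref{ex}, the same conclusion transfers to $M(\textbf{R}(Y))$, and then to $M(\mathbb{R}(X,Y))$ since the former is a subspace of the latter.

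The only point to be careful about — and essentially the sole obstacle — is that uncountable cellularity does \emph{not} by itself preclude metrizability (a discrete space of cardinality $\mathfrak{c}$ is a metrizable counterexample); it is precisely the compactness of $((M))$, inherited from its identification with a space of $\mathbb{R}$-places, that closes the gap. One could equally phrase the last step via weight: for a compact Hausdorff space $X$ one has $w(X) = \aleph_0$ iff $X$ is metrizable, and $c(X) \le w(X)$ always, so $c(((M))) \ge \mathfrak{c}$ forces $w(((M))) \ge \mathfrak{c}$ and non-metrizability. Either route is immediate and requires no further computation.
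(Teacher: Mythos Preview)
Your argument is correct and essentially the same as the paper's: both deduce non-separability from the cellularity bound of the preceding Lemma, then use that a compact metric space is separable (you are just more explicit than the paper about where the compactness of $((M))$ comes from). The final paragraph about transferring the conclusion to $M(\textbf{R}(Y))$ and $M(\mathbb{R}(X,Y))$ is fine but lies outside the scope of this corollary---the paper handles those in the two corollaries that follow.
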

\begin{proof}
 Since cellularity of any space is not greater than density, $((M))$ can not be separable. 
Since every compact, metric space is separable, $((M))$ can not be metrizable.
\end{proof}
Using Theorem \ref{rc}, Example \ref{ex}, Corollary \ref{m} and Corollary \ref{met} we get:
\begin{corollary}
Let $\textbf{R}$ be a fixed real closure of $\mathbb R(X)$. The space $M(\textbf{R}(Y))$ is not metrizable.
\end{corollary}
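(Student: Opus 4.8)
The plan is to assemble the chain of homeomorphic identifications already established in the previous sections and then transport non‑metrizability along it. First I would invoke Example~\ref{ex}: since $\textbf{R}$ is a real closure of $\mathbb R(X)$, its natural valuation has value group $\Gamma=\mathbb Q$ and Archimedean residue field $\mathbb R$, and one has the inclusions $\mathbb R(\mathbb Q)\subset\textbf{R}\subset\mathbb R_{\aleph_0}((\mathbb Q))$. By Theorem~\ref{con} the field $\mathbb R_{\aleph_0}((\mathbb Q))$ is continuous (continuous closed), and since $\mathbb R(\mathbb Q)$ is dense in $\mathbb R_{\aleph_0}((\mathbb Q))$, a fortiori $\textbf{R}$ is dense in it; hence $\mathbb R_{\aleph_0}((\mathbb Q))$ is \emph{the} continuous closure $\widetilde{\textbf{R}}$ of $\textbf{R}$ (uniqueness being part of the definition).

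Next I would apply Theorem~\ref{rc} with the role of $R$ played by $\textbf{R}$ and the indeterminate renamed from $X$ to $Y$: the restriction map then yields a homeomorphism $M(\textbf{R}(Y))\cong M(\widetilde{\textbf{R}}(Y))=M(\mathbb R_{\aleph_0}((\mathbb Q))(Y))$. (Here one uses that $\widetilde{\textbf{R}}$ is again real closed, as recorded after the definition of continuous closure, so the hypotheses of Theorem~\ref{rc} are met.) Corollary~\ref{m} then identifies $M(\mathbb R_{\aleph_0}((\mathbb Q))(Y))$ homeomorphically with the space $((M))$ of generalized power series described there. Composing the two homeomorphisms gives
\[
M(\textbf{R}(Y))\;\cong\;M(\mathbb R_{\aleph_0}((\mathbb Q))(Y))\;\cong\;((M)).
\]

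Finally, Corollary~\ref{met} asserts that $((M))$ is not metrizable — this is the one substantive input, and it rests on the preceding Lemma, which exhibits the pairwise disjoint open sets $U_t$, $t\in\mathbb R$, witnessing cellularity at least $\mathfrak c$; hence $((M))$ is non‑separable and, being compact, non‑metrizable. Since metrizability is invariant under homeomorphism, $M(\textbf{R}(Y))$ is not metrizable either. The only points requiring care are purely bookkeeping: that the chain of homeomorphisms applies verbatim after renaming $X$ to $Y$, and that $\mathbb R_{\aleph_0}((\mathbb Q))$ genuinely satisfies the hypotheses of Theorem~\ref{rc} (real closed and equal to the continuous closure of $\textbf{R}$). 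The real difficulty — the cellularity estimate — has already been dealt with in Corollary~\ref{met}, so no further obstacle remains.
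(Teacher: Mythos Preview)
Your proposal is correct and follows exactly the approach of the paper, which simply strings together Example~\ref{ex}, Theorem~\ref{rc}, Corollary~\ref{m}, and Corollary~\ref{met} to conclude. You have merely spelled out the bookkeeping (density, real closedness of $\widetilde{\textbf{R}}$, renaming $X$ to $Y$) that the paper leaves implicit.
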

\begin{corollary}
 The space $M(\mathbb R(X,Y))$ is not metrizable.
\end{corollary}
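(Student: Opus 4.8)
The plan is to reduce this statement to the preceding corollary, which asserts that $M(\textbf{R}(Y))$ is not metrizable for a fixed real closure $\textbf{R}$ of $\mathbb R(X)$. The bridge is the fact, recalled in the introduction, that $M(\textbf{R}(Y))$ is (homeomorphic to) a topological subspace of $M(\mathbb R(X,Y))$: indeed $\mathbb R(X,Y) = \mathbb R(X)(Y)$ sits inside $\textbf{R}(Y)$, since $\textbf{R}$ is an algebraic (real closure) extension of $\mathbb R(X)$, and restriction of $\mathbb R$-places along this inclusion provides the embedding $M(\textbf{R}(Y)) \hookrightarrow M(\mathbb R(X,Y))$.

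Granting this, the argument is a one-line topological observation: metrizability is hereditary, so every subspace of a metrizable space is again metrizable. Consequently, if $M(\mathbb R(X,Y))$ were metrizable, then so would be its subspace $M(\textbf{R}(Y))$, contradicting the previous corollary. Hence $M(\mathbb R(X,Y))$ is not metrizable.

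One could equally phrase this through separability, which is closer to the way non-metrizability of $M(\textbf{R}(Y))$ was actually obtained: $M(\mathbb R(X,Y))$ is compact (being a continuous image of the compact space $\mathcal X(\mathbb R(X,Y))$), so metrizability would force separability; but separability passes to subspaces of a metric space, whereas $M(\textbf{R}(Y))$ was shown (via Corollary~\ref{met} together with Theorem~\ref{rc} and Example~\ref{ex}) to have cellularity at least $\mathfrak c$ and therefore to be non-separable — again a contradiction. The only ingredient that is not completely formal is the topological embedding $M(\textbf{R}(Y)) \hookrightarrow M(\mathbb R(X,Y))$; since this is quoted as known, there is no genuine obstacle, and the remainder of the proof is immediate.
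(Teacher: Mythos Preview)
Your argument is correct and coincides with the paper's own proof: the paper simply notes (citing \cite[Lemma 8]{cr}) that $M(\textbf{R}(Y))$ is a subspace of $M(\mathbb R(X,Y))$ and invokes the preceding corollary together with hereditariness of metrizability. Your additional remark via separability is a harmless elaboration of the same idea.
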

\begin{proof}
Let $\textbf{R}$ be a fixed real closure of $\mathbb R(X)$.  $M(\textbf{R}(Y))$ is a subspace of
$M(\mathbb R(X,Y))$ (see \cite[Lemma 8]{cr}).
\end{proof}

\noindent
\textbf{Acknowledgment}. We thank Prof. A. S\l adek and Prof. M. Kula who spent time on reading this paper and gave us valuable comments.

\end{document}